\newtheorem{remark}{Remark}[section]
\newtheorem{theorem}{Theorem}[section]
\newtheorem{lemma}{Lemma}[section]
\begin{document}
\title{Stability Criteria for Linear Hamiltonian Systems Under Impulsive Perturbations}

\author{Z. Kayar, A. Zafer}

\address{Z. Kayar \newline
Department of Mathematics, Middle East Technical University 06531 Ankara, Turkey}
\email{zkayar@metu.edu.tr}

\address{A. Zafer \newline
Department of Mathematics, Middle East Technical University 06531 Ankara, Turkey}
\email{zafer@metu.edu.tr}

\thanks{Submitted February 9, 2010. }
\subjclass[2000]{34A37, 35B35}
\keywords{Stability; Hamiltonian; Impulse; Periodic system; Lyapunov type inequality}

\begin{abstract}
Stability  criteria are given for linear periodic Hamiltonian systems with impulse effect. A Lyapunov type inequality and a disconjugacy criterion are also established. The results improve the ones in the literature for such systems.
\end{abstract}

\maketitle

\section{Introduction\label{Sec1}}

Consider the Hamiltonian system  
\begin{equation}
y^{\prime }=JH(t)y,\quad t\in \mathbb{R},  \label{1.1}
\end{equation}%
where $H(t)$ is a symmetric $2\times 2$ matrix with entries $h_{jk}(t)$ and 
\[
J=\left[ 
\begin{array}{cc}
0 & 1 \\ 
-1 & 0%
\end{array}%
\right]. 
\]%

By setting 
$y_{1}(t)=x(t)$, $y_{2}(t)=u(t)$,
$h_{12}(t)=h_{21}(t)=a(t)$, $h_{22}(t)=b(t)$, $h_{11}(t)=c(t)$, 
one can write the system (\ref{1.1}) as 
\begin{equation}
x^{\prime }=a(t)x+b(t)u,\quad u^{\prime }=-c(t)x-a(t)u,\quad t\in \mathbb{R}.
\label{1.2}
\end{equation}
We remark that the second-order differential equation 
\begin{eqnarray*}
(p(t)x^{\prime })^{\prime }+q(t)x=0,\quad t\in \mathbb{R},  \label{1.3}
\end{eqnarray*}%
is a special case of (\ref{1.2}) via  
\[
a(t)\equiv 0,\quad b(t)=\frac{1}{p(t)},\quad c(t)=q(t), 
\]
where $p(t)$ and $q(t)$ are real-valued functions and $p(t)\neq 0$ for any $
t\in \mathbb{R}$.

In what follows we assume that 
$a(t)$, $b(t)$, and $c(t)$ satisfy the periodicity conditions
\begin{eqnarray*}
a(t+T)=a(t),\quad b(t+T)=b(t),\quad c(t+T)=c(t),\quad t\in \mathbb{R}. 
\end{eqnarray*}

The system (\ref{1.2}) (or (\ref{2.1})) is said to be \textit{stable} if all
solutions are 
bounded on $\mathbb{R}$, \textit{unstable} if all nontrivial
solutions are unbounded on $\mathbb{R}$, and \textit{conditionally stable}
if there exits a nontrivial solution bounded on $\mathbb{R}$. \label{def} 

The following well known stability theorem was given by  M. Krein in \cite{mkrein}.

\begin{theorem}
\label{t01} If
\begin{eqnarray*}
b(t)\geq 0,\quad c(t)\geq 0,\quad b(t)c(t)-a^{2}(t)\geq 0;  
\end{eqnarray*}%
\begin{eqnarray*}
\int_{0}^{T}b(t)\,\mathrm{d}t\int_{0}^{T}c(t)\,\mathrm{d}t-\bigg(%
\int_{0}^{T}a(t)\,\mathrm{d}t\bigg)^{2}>0;  
\end{eqnarray*}%
\begin{eqnarray*}
\int_{0}^{T}|a(t)|\,\mathrm{d}t+\bigg\{\int_{0}^{T}b(t)\,\mathrm{d}%
t\int_{0}^{T}c(t)\,\mathrm{d}t\bigg\}^{1/2}<2,  
\end{eqnarray*}%
then system (\ref{1.2}) is stable.
\end{theorem}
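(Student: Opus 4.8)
The plan is to combine Floquet theory with a phase-plane (Pr\"ufer) analysis, reducing stability to a two-sided bound on the trace of the monodromy matrix. Let $X(t)$ be the fundamental matrix of \eqref{1.2} with $X(0)=I$ and set $M=X(T)$. Since the diagonal entries of $JH(t)$ are $a$ and $-a$, the coefficient matrix has zero trace, so Liouville's formula gives $\det X(t)\equiv 1$; hence $M$ is symplectic and its Floquet multipliers $\rho_{1},\rho_{2}$ satisfy $\rho_{1}\rho_{2}=1$ and $\rho_{1}+\rho_{2}=\operatorname{tr}M=:\tau$. A sufficient condition for all solutions to be bounded on $\mathbb{R}$ is that the multipliers form a simple complex-conjugate pair on the unit circle, i.e. $-2<\tau<2$; so it suffices to establish this strict inequality.

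Next I would pass to polar coordinates $x=r\cos\phi$, $u=r\sin\phi$. A direct computation turns \eqref{1.2} into
\[
\phi'=-\big(c\cos^{2}\phi+2a\sin\phi\cos\phi+b\sin^{2}\phi\big)=-w^{T}Hw,
\]
where $w=(\cos\phi,\sin\phi)^{T}$ and $H=\begin{bmatrix} c & a\\ a & b\end{bmatrix}$. The first hypothesis says precisely that $H(t)\ge 0$, so $\phi$ is nonincreasing: every trajectory rotates clockwise by the total amount $\Delta:=\phi(0)-\phi(T)=\int_{0}^{T}w^{T}Hw\,dt\ge 0$. Because $\phi$ is monotone, distinct trajectories cannot cross, and a real multiplier $\rho>0$ corresponds to an initial direction fixed by $M$ (so $\Delta\in 2\pi\mathbb{Z}$ for that trajectory), while $\rho<0$ corresponds to one sent to its opposite ($\Delta\in\pi+2\pi\mathbb{Z}$). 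Consequently, if I can show $0<\Delta<\pi$ for every trajectory, then both $\Delta\in 2\pi\mathbb{Z}$ and $\Delta\in\pi+2\pi\mathbb{Z}$ are impossible, no real multiplier occurs, and $-2<\tau<2$ follows.

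For the lower bound $\Delta>0$ I would use the second hypothesis. If $\Delta=0$ for some trajectory, then, $\phi$ being monotone, $\phi$ is constant on $[0,T]$; writing $w_{0}$ for that constant unit vector, $w_{0}^{T}H(t)w_{0}=0$ for all $t$, whence $w_{0}^{T}\overline{H}\,w_{0}=0$ with $\overline{H}:=\int_{0}^{T}H\,dt$ having entries $\overline{c},\overline{a},\overline{a},\overline{b}$ (writing $\overline{f}:=\int_{0}^{T}f\,dt$). But the first and second hypotheses force $\overline{H}$ to be positive definite, since $\overline{H}\ge 0$ and $\det\overline{H}=\overline{b}\,\overline{c}-\overline{a}^{2}>0$; this contradicts $w_{0}^{T}\overline{H}\,w_{0}=0$. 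Hence $\Delta>0$ for every trajectory, giving $\tau<2$.

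The upper bound $\Delta<\pi$, supplied by the third hypothesis, is the heart of the matter and the main obstacle. The key geometric observation is that the $x$-component $x=r\cos\phi$ vanishes exactly when $\phi\equiv\pi/2\pmod{\pi}$; thus if $\Delta\ge\pi$ for some trajectory, the monotone decrease of $\phi$ produces two consecutive zeros $t_{1}<t_{2}$ of $x$ in $[0,T]$ with $x\neq 0$ on $(t_{1},t_{2})$. I would then invoke a Lyapunov-type inequality for \eqref{1.2}: any nontrivial solution whose $x$-component vanishes at two points $t_1<t_2$ must satisfy
\[
\int_{t_{1}}^{t_{2}}|a(t)|\,dt+\bigg(\int_{t_{1}}^{t_{2}}b(t)\,dt\int_{t_{1}}^{t_{2}}c(t)\,dt\bigg)^{1/2}\ge 2.
\]
Since $b,c\ge 0$ and $[t_{1},t_{2}]\subseteq[0,T]$, the left-hand side is bounded above by $\int_{0}^{T}|a|\,dt+(\overline{b}\,\overline{c})^{1/2}$, which is $<2$ by the third hypothesis --- a contradiction. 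Therefore $\Delta<\pi$ for every trajectory. Combining the two bounds yields $0<\Delta<\pi$, hence $-2<\tau<2$ and stability. The genuine difficulty is the sharp Lyapunov-type inequality itself: a pointwise estimate such as $c\cos^{2}\phi+b\sin^{2}\phi\le b+c$ only produces the too-large sum $\overline{b}+\overline{c}$, so one must exploit the rotational coupling of the solution to recover the geometric mean $(\overline{b}\,\overline{c})^{1/2}$ and the exact constant $2$. Establishing that inequality (together with the attendant disconjugacy criterion) is precisely where the real work of the paper lies, and it is the step I expect to be hardest.
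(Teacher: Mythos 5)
First, a point of orientation: the paper gives no proof of Theorem \ref{t01} --- it is Krein's classical theorem, quoted from \cite{mkrein}. The only meaningful comparison is therefore with the paper's proof of its own generalization, Theorem \ref{t1} in Section \ref{Sec4}, and there your architecture coincides with the paper's: reduce stability to $|A|<2$ via Floquet theory (Lemma \ref{Lem2.1}), suppose $A^{2}\geq 4$, produce a nontrivial solution whose first component has two zeros $t_{1}<t_{2}$ with $t_{2}-t_{1}\leq T$ and no zero between them, and contradict the third hypothesis with a Lyapunov-type inequality. Your Pr\"ufer-angle construction is a self-contained substitute for Lemma \ref{Lem3.1} (which the paper imports from \cite{guszaf2} without proof), and ruling out $\Delta=0$ via positive definiteness of $\overline H$ is a clean use of the second hypothesis. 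One repair is needed in the upper-bound step: when $\Delta=\pi$ exactly and $\phi(0)\not\equiv\pi/2\pmod{\pi}$, the component $x$ has only \emph{one} zero in $[0,T]$. Work instead with the Floquet solution attached to the real multiplier $\rho$: locate one zero $t_{1}$ of $x$, note $x(t_{1}+T)=\rho\,x(t_{1})=0$ while $\phi$ decreases by exactly $\pi$ on $[t_{1},t_{1}+T]$ so that $x\neq 0$ there, and then use $T$-periodicity of $a,b,c$ to replace integrals over $[t_{1},t_{1}+T]$ by integrals over $[0,T]$. This is exactly the normalization $t_{2}-t_{1}\leq T$ in Lemma \ref{Lem3.1}.

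The one genuine gap is that your decisive step --- the Lyapunov-type inequality $\int_{t_{1}}^{t_{2}}|a|\,\mathrm{d}t+\bigl(\int_{t_{1}}^{t_{2}}b\,\mathrm{d}t\int_{t_{1}}^{t_{2}}c\,\mathrm{d}t\bigr)^{1/2}\geq 2$ --- is asserted rather than proved, and you overestimate its difficulty: it is the elementary computation of Section \ref{Sec4} (in its unweighted, impulse-free form), not a deep obstacle. Let $t_{0}\in(t_{1},t_{2})$ maximize $x>0$. Integrating $x'=ax+bu$ over $[t_{1},t_{0}]$ and over $[t_{0},t_{2}]$ gives $x(t_{0})=\int_{t_{1}}^{t_{0}}(ax+bu)\,\mathrm{d}t$ and $x(t_{0})=-\int_{t_{0}}^{t_{2}}(ax+bu)\,\mathrm{d}t$; adding the two absolute-value estimates and using $0\le x\le x(t_{0})$, $b\ge 0$ yields $2x(t_{0})\leq x(t_{0})\int_{t_{1}}^{t_{2}}|a|\,\mathrm{d}t+\int_{t_{1}}^{t_{2}}b|u|\,\mathrm{d}t$. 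Cauchy--Schwarz bounds the last term by $\bigl(\int_{t_{1}}^{t_{2}}b\,\mathrm{d}t\bigr)^{1/2}\bigl(\int_{t_{1}}^{t_{2}}bu^{2}\,\mathrm{d}t\bigr)^{1/2}$, and integrating the identity $(xu)'=bu^{2}-cx^{2}$ between the two zeros gives $\int_{t_{1}}^{t_{2}}bu^{2}\,\mathrm{d}t=\int_{t_{1}}^{t_{2}}cx^{2}\,\mathrm{d}t\leq x^{2}(t_{0})\int_{t_{1}}^{t_{2}}c\,\mathrm{d}t$ (here $c\geq 0$, so $c=c^{+}$). Dividing by $x(t_{0})>0$ gives the inequality, and with it your argument closes; as written, however, the proposal leaves its central estimate as a black box.
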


In \cite{GusKay} Guseinov and Kaymak\c calan obtained a similar result by making use of the Floquet theorem.

\begin{theorem}
\label{t02} If
\begin{eqnarray*}
b(t)>0,\quad c(t)\geq 0, \quad b(t)c(t)-a^{2}(t)\geq 0;  
\end{eqnarray*}
\begin{eqnarray*}
b(t)c(t)-a^{2}(t)\not\equiv 0; 
\end{eqnarray*}
\begin{eqnarray*}
\int_{0}^{T}|a(t)|\,\mathrm{d}t+\bigg\{\int_{0}^{T}b(t)\,\mathrm{d}%
t\int_{0}^{T}c(t)\,\mathrm{d}t\bigg\}^{1/2}<2,  
\end{eqnarray*}
then system (\ref{1.2}) is stable.
\end{theorem}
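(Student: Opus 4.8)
The plan is to proceed through Floquet theory, reducing stability to a bound on the trace of the monodromy matrix, and to obtain that bound from a Lyapunov-type inequality together with a Riccati monotonicity argument.

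First I would let $\Phi(t)$ denote the fundamental matrix of (\ref{1.2}) with $\Phi(0)=I$ and set $M=\Phi(T)$. Since $\mathrm{tr}(JH)\equiv 0$, Abel's formula gives $\det M=1$, so the characteristic multipliers $\rho_1,\rho_2$ satisfy $\rho_1\rho_2=1$ and are the roots of $\rho^2-(\mathrm{tr}\,M)\rho+1=0$. Standard Floquet theory then reduces the proof to showing $|\mathrm{tr}\,M|<2$: in that regime the multipliers form a nonreal conjugate pair on the unit circle, $M$ is diagonalizable, and every solution is bounded on $\mathbb{R}$. I would therefore argue by contradiction, assuming $|\mathrm{tr}\,M|\ge 2$, which produces a nontrivial solution $(x,u)$ with $y(t+T)=\rho\,y(t)$ for some real $\rho$, namely $\rho\ge 1$ when $\mathrm{tr}\,M\ge 2$ and $\rho\le -1$ when $\mathrm{tr}\,M\le -2$.

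The analytic core is a Lyapunov-type inequality: if a nontrivial solution has $x(\alpha)=x(\beta)=0$ with $x\not\equiv 0$ on $(\alpha,\beta)$, then $\int_\alpha^\beta|a|\,\mathrm{d}t+\big(\int_\alpha^\beta b\,\mathrm{d}t\int_\alpha^\beta c\,\mathrm{d}t\big)^{1/2}\ge 2$. To prove it I would pick $t_0$ where $|x|$ attains its maximum $M>0$ on $[\alpha,\beta]$; writing $x(t_0)$ both as $\int_\alpha^{t_0}x'$ and as $-\int_{t_0}^\beta x'$ yields $M\le\frac12\int_\alpha^\beta|ax+bu|\le\frac12\big(M\int|a|+\int b|u|\big)$. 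The key identity is Hamiltonian: $(xu)'=bu^2-cx^2$, so integrating over $[\alpha,\beta]$ and using $x(\alpha)=x(\beta)=0$ gives $\int b u^2=\int cx^2\le M^2\int c$. Cauchy--Schwarz then yields $\int b|u|\le(\int b)^{1/2}(\int bu^2)^{1/2}\le M(\int b\int c)^{1/2}$, and dividing the resulting inequality by $M$ produces the claim. I expect this identity-plus-Cauchy--Schwarz bookkeeping, and pinning down the sharp constant $2$, to be the main obstacle.

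It remains to combine the pieces. When $\rho\le -1$, the relation $x(t+T)=\rho x(t)$ forces $x$ to vanish at some $\alpha$; then $x(\alpha+T)=\rho x(\alpha)=0$ gives two zeros exactly one period apart, and periodicity turns the Lyapunov inequality on $[\alpha,\alpha+T]$ into $\int_0^T|a|+(\int_0^Tb\int_0^Tc)^{1/2}\ge 2$, contradicting the hypothesis. When $\rho\ge 1$, either $x$ has a zero---whence $x(\cdot+T)=\rho x(\cdot)$ again yields two zeros a period apart and the same contradiction---or $x$ never vanishes, in which case $w=u/x$ is $T$-periodic and, by the Riccati equation $w'=-bw^2-2aw-c=-b(w+a/b)^2-(bc-a^2)/b\le 0$, nonincreasing; a periodic nonincreasing function is constant, forcing $bc-a^2\equiv 0$ and contradicting the hypothesis $bc-a^2\not\equiv 0$. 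Hence $|\mathrm{tr}\,M|<2$, and the system is stable.
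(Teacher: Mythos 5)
Your proof is correct and complete: the Floquet reduction to $|\operatorname{tr}M|<2$, the Lyapunov-type inequality obtained from $2\max|x|\le\int|x'|$ together with the identity $(xu)'=bu^2-cx^2$ and Cauchy--Schwarz, and the Riccati monotonicity argument for the zero-free case (where $bc-a^2\not\equiv 0$ is exactly what is needed) fit together without gaps. Note that the paper does not actually prove Theorem \ref{t02}; it is imported from \cite{GusKay}. Your argument is, however, structurally the same as the machinery the paper deploys for its own main theorems: contradiction from $A^2\ge 4$, production of a solution whose first component has two zeros at most one period apart (in the paper this, including the Riccati-type analysis of the zero-free case, is packaged into Lemmas \ref{Lem3.1}--\ref{Lem3.2} and condition (C)), followed by a Lyapunov-type inequality. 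The only substantive difference is the form of that inequality: yours is the classical one with the additive term $\int_0^T|a(t)|\,\mathrm{d}t$, matching the hypothesis of Theorem \ref{t02}, whereas the paper's version (\ref{lyp}) absorbs $a$ into the exponential weight $\exp\bigl(-2\int_{t_0}^{t}a(u)\,\mathrm{d}u\bigr)$, which is what allows the hypotheses (\ref{acond})--(\ref{acond2}) to be relaxed in Theorems \ref{t1}--\ref{t2}.
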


Recently, Guseinov and Zafer \cite{guszaf2} established stability criteria for Hamitonian systems under impulse effect of the form
\begin{equation}
\begin{array}{l}
x^{\prime }=a(t)x+b(t)u,\quad u^{\prime }=-c(t)x-a(t)u,\quad t\neq \tau _{i},
\\ 
x(\tau _{i}+)=\alpha _{i}x(\tau _{i}-),\quad u(\tau _{i}+)=\alpha _{i}u(\tau
_{i}-)-\beta _{i}x(\tau _{i}-),%
\end{array}
\label{2.1}
\end{equation}%
where $t\in \mathbb R$ and $i\in \mathbb Z$; $\{\tau _{i}\}$ ($i\in \mathbb Z$) is a sequence of real numbers (impulse points) such that $\tau _{i}<\tau _{i+1}$ for all $i\in \mathbb Z$, and that  
\[
\tau _{i+r}=\tau _{i}+T\;\;(i\in \mathbb Z),\quad 0<\tau _{1}<\tau _{2}<\cdots
<\tau _{r}<T
\]%
for some  positive
real number $T$ and some positive integer $r$; The functions $a,b,c:\;\mathbb R\backslash \{\tau _{i}:\,i\in \mathbb Z\}\rightarrow \mathbb R$
and the real sequences $\{\alpha _{i}\},\{\beta _{i}\}$ ($i\in \mathbb Z$) satisfy the periodicity 
conditions: 
\begin{eqnarray*}
&&
\mbox{$a(t+T)=a(t)$, $b(t+T)=b(t)$, $c(t+T)=c(t)$, $\;
t\in \mathbb R \backslash \{\tau_i:\, i\in \mathbb Z\}$}; \\
&& \mbox{$\alpha_i\neq 0$, $\alpha_{i+r}=\alpha_i$,
$\beta_{i+r}=\beta_i$,$\;i\in \mathbb Z$};\\
&&a,b,c\in \mathrm{PC}[0,T],
\end{eqnarray*}
where 
${\mathrm{PC}}[0,T]$ denotes the set of functions $$f:\,[0,T]\backslash \{\tau _{1},\tau
_{2},\ldots ,\tau _{r}\}\rightarrow \mathbb R$$ such that
$f\in C([0,T]\backslash \{\tau
_{1},\tau _{2},\ldots ,\tau _{r}\})$ and 
the left-hand limit $f(\tau _{i}-)$ and
the right-hand limit $f(\tau _{i}+)$ exist (finite) for each $i\in N_1^r \equiv \{1,2,\ldots ,r\}.$ 
As usual, by ${\mathrm{PC}}^{1}[0,T]$ we mean the set of
functions $f\in {\mathrm{PC}}[0,T]$ such that $f'\in {\mathrm{PC}}[0,T].$

The next two theorems, extracted from \cite{guszaf2}, are of particular importance for our work in this paper. We note that the inequality \eqref{113} is given as a nonstrict inequality in \cite{guszaf2}, however it should be a strict one as pointed out in \cite{wang}.

\begin{theorem} \label{t03} Assume  that 
\begin{equation}\label{111}\displaystyle
\prod_{i=1}^{r}\alpha _{i}^{2}=1;  
\end{equation}
\begin{equation}\label{111a}
b(t)>0,\quad \int_{0}^{T}\bigg(c(t)-\frac{a^{2}(t)}{b(t)}\bigg)\,\mathrm{d}t+\sum_{i=1}^{r}\frac{\beta _{i}}{\alpha _{i}}>0;  
\end{equation}%
\begin{equation}\displaystyle
\int_{0}^{T}|a(t)|\,\mathrm{d}t+\bigg[\int_{0}^{T}b(t)\,\mathrm{d}t\bigg]^\frac{1}{2}\,\bigg[\int_{0}^{T}c^{+}(t)\,\mathrm{d}t+\sum_{i=1}^{r}\bigg(\frac{
\beta _{i}}{\alpha _{i}}\bigg)^{+}\bigg]^\frac{1}{2}\,< 2,  \label{113}
\end{equation}
where \begin{equation}
c^{+}(t)=\max \{c(t),0\},\quad \left( \frac{\beta _{i}}{\alpha _{i}}\right)
^{+}=\max \left\{ \frac{\beta _{i}}{\alpha _{i}},0\right\}.  \label{3.16}
\end{equation}

Then impulsive system (\ref{2.1}) is stable.
\end{theorem}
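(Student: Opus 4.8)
The plan is to use Floquet theory to reduce stability of \eqref{2.1} to the location of the Floquet multipliers, and then to exclude the unstable configurations by combining a Riccati identity (where \eqref{111a} enters) with a Lyapunov-type inequality (where \eqref{113} enters). First I would form the monodromy matrix $M$, the product of the evolution operator of the continuous part of \eqref{2.1} over one period with the $r$ jump matrices $\bigl[\begin{smallmatrix}\alpha_i & 0\\ -\beta_i & \alpha_i\end{smallmatrix}\bigr]$. Since $\mathrm{tr}(JH)\equiv 0$, the continuous evolution is symplectic (determinant $1$), while each jump matrix has determinant $\alpha_i^2$; hence $\det M=\prod_{i=1}^r\alpha_i^2=1$ by \eqref{111}. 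The multipliers $\rho,\rho^{-1}$ therefore satisfy $\rho\rho^{-1}=1$, and it suffices to show $|\mathrm{tr}\,M|<2$: in that case the two multipliers are distinct and unimodular, $M$ is diagonalizable with spectrum on the unit circle, and every solution is bounded.

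Arguing by contradiction, suppose $|\mathrm{tr}\,M|\ge 2$. Then $M$, being real with determinant $1$, has a real multiplier $\rho$ and a nontrivial real solution $y=(x,u)$ with $y(t+T)=\rho\,y(t)$. To tame the discontinuities I would introduce $P(t)=\prod_{\tau_i\le t}\alpha_i$ and pass to $\bar x=x/P$, $\bar u=u/P$; a short computation shows that $\bar x$ becomes continuous, that $\bar x,\bar u$ satisfy the same differential system between impulses, and that the impulse condition collapses to the single jump $\bar u(\tau_i+)=\bar u(\tau_i-)-(\beta_i/\alpha_i)\bar x(\tau_i)$, the new multiplier $\rho/\prod_{i=1}^r\alpha_i$ remaining real. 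After dropping bars I work with a system in which $x$ is continuous and only $u$ jumps, by $-(\beta_i/\alpha_i)x(\tau_i)$.

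Now comes the dichotomy driven by \eqref{111a}. If $x$ never vanishes on $[0,T]$, then $w=u/x$ is well defined and, since the multiplier cancels in the ratio, $T$-periodic; integrating its Riccati equation $w'=-c-2aw-bw^2$ over a period while accounting for the jumps $\Delta w=-\beta_i/\alpha_i$ and completing the square gives $\int_0^T\!\big(c-a^2/b\big)\,\mathrm dt+\sum_{i=1}^r\beta_i/\alpha_i\le 0$, contradicting \eqref{111a}. Hence $x$ must vanish somewhere; since $x\not\equiv 0$ and $x(t+T)=\rho x(t)$, there are two consecutive zeros $t_1<t_2$ with $t_2-t_1\le T$ and $x\ne 0$ on $(t_1,t_2)$. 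On this interval I would integrate $\tfrac{d}{dt}(xu)=bu^2-cx^2$, using $x(t_1)=x(t_2)=0$ and the jump contributions $-(\beta_i/\alpha_i)x(\tau_i)^2$, to get $\int_{t_1}^{t_2}bu^2\,\mathrm dt\le M_0^2\big[\int_0^T c^+\,\mathrm dt+\sum_{i=1}^r(\beta_i/\alpha_i)^+\big]$ with $M_0=\max|x|$; estimating $M_0$ from the two zeros and applying the Cauchy--Schwarz inequality to $\int b|u|$ then yields $2\le\int_0^T|a|\,\mathrm dt+\big(\int_0^T b\big)^{1/2}\big[\int_0^T c^+ +\sum_{i=1}^r(\beta_i/\alpha_i)^+\big]^{1/2}$, contradicting \eqref{113}. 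This forces $|\mathrm{tr}\,M|<2$ and proves stability.

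I expect the Lyapunov-type inequality of the last paragraph to be the main obstacle. The delicate points are the bookkeeping of the jump terms (handled cleanly by the $P$-substitution, which is also why \eqref{113} has to be a strict inequality rather than a nonstrict one), the correct emergence of the positive parts $c^+$ and $(\beta_i/\alpha_i)^+$ from the estimates, and the interplay whereby \eqref{111a} guarantees that $x$ genuinely oscillates, so that two zeros at distance at most $T$ are available to feed the inequality. Specializing to the non-impulsive case recovers the classical criteria in the spirit of Theorems \ref{t01} and \ref{t02}.
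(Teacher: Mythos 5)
Your proposal is correct and follows essentially the same route as the paper: reduction via Floquet theory to showing $A^{2}<4$ (Lemma \ref{Lem2.1}), the substitution \eqref{3.18} that removes the $\alpha_i$ jumps, the existence of two zeros of $x$ within one period when $A^{2}\geq 4$ (Lemma \ref{Lem3.1}, which you reprove via the Riccati identity and condition \eqref{111a}), and the Lyapunov-type contradiction built from $(vz)'=b(t)v^{2}-c(t)z^{2}$ together with the Cauchy--Schwarz inequality. The only difference is organizational: the paper quotes Lemma \ref{Lem3.1} from \cite{guszaf2} rather than proving it, whereas you supply the Riccati argument explicitly.
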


We say that the condition (C) is satisfied if one of the following statements holds:
\begin{itemize}
	\item[] (C1) $\beta _{i}\neq 0$ $\exists i\in N_1^r=\{1,2,\ldots,r\}$,
	\item[] (C2)  $\beta _{i}= 0$ $\forall i\in N_1^r$, $\; a/b \not\in \mathrm{PC}^{1}[0,T]$,
	\item[] (C3)  $\beta _{i}= 0$ $\forall i\in N_1^r$, $\; a/b\in \mathrm{PC}^{1}[0,T]$, 
$\;\displaystyle \left( \frac{a(t)}{b(t)}\right)'-c(t)+\frac{a^{2}(t)}{b(t)}%
\not\equiv 0$. 
\end{itemize} 
\begin{theorem} \label{t04} Assume that (\ref{111}) and (\ref{113}) hold,  $a/b \in C[0,T]$, the condition (C) is satisfied, and that 
\begin{equation}\label{111b}
b(t)>0,\quad\int_{0}^{T}\bigg(c(t)-\frac{a^{2}(t)}{b(t)}\bigg)\,\mathrm{d}t+\sum_{i=1}^{r}\frac{\beta _{i}}{\alpha _{i}}=0.  
\end{equation}
Then the impulsive system (\ref{2.1}) is stable.
 
\end{theorem}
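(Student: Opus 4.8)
The plan is to read off stability from the Floquet multipliers of the period map of \eqref{2.1}. Let $M$ be the monodromy matrix obtained by integrating the continuous part over one period and composing with the $r$ jump matrices $\left[\begin{smallmatrix}\alpha_i&0\\-\beta_i&\alpha_i\end{smallmatrix}\right]$. Each jump matrix has determinant $\alpha_i^2$ and the continuous Hamiltonian flow is symplectic, so \eqref{111} gives $\det M=1$. Hence the two multipliers are reciprocal and stability is decided by the discriminant $\rho:=\mathrm{tr}\,M$: the system \eqref{2.1} is stable when $|\rho|<2$, and also when $\rho=\pm2$ with $M=\pm I$, whereas a nondiagonalizable $M$ with $|\rho|=2$ carries a secular (linearly growing) solution and destroys stability. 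The whole argument therefore reduces to pinning down $\rho$.

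I would first recover the two one-sided bounds that are already present in the proof of Theorem \ref{t03}. The Lyapunov-type inequality \eqref{113}, being strict, keeps the system away from the antiperiodic boundary and gives $\rho>-2$; this estimate uses neither the sign nor the vanishing of the expression in \eqref{111a}--\eqref{111b}, so it transfers here unchanged. For the opposite side, $b(t)>0$ together with the nonnegativity of $\int_0^T(c-a^2/b)\,dt+\sum_{i=1}^r\beta_i/\alpha_i$ yields $\rho\le2$: a multiplier exceeding $1$ would force a sign-definite growing solution along which the accessory functional takes a sign inconsistent with that expression being $\ge0$. Under \eqref{111b} we thus obtain $-2<\rho\le2$, and only the single boundary value $\rho=2$ remains to be excluded.

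The heart of the proof---and the step I expect to be the main obstacle---is to show that $\rho=2$ is incompatible with condition (C) once the expression in \eqref{111b} is exactly zero. If $\rho=2$ then \eqref{2.1} has a nontrivial $T$-periodic solution $(x,u)$. Along any solution one has $(xu)'=bu^2-cx^2$, and eliminating $u=(x'-ax)/b$ (legitimate since $b>0$ and $a/b\in C[0,T]$) turns the periodicity of $(x,u)$ into the statement that the accessory functional $\mathcal F[x]=\int_0^T\big[(x')^2/b+(c-a^2/b)x^2\big]\,dt+\sum_{i=1}^r(\beta_i/\alpha_i)\,x(\tau_i)^2$ vanishes on this $x$, while the hypothesis \eqref{111b} is precisely $\mathcal F[\mathbf 1]=0$. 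Thus both the periodic solution and the (transformed) constant are null for $\mathcal F$, so the estimate $\rho\le2$ is saturated and the inequality \eqref{113} is attained. A Picone/Wirtinger equality analysis should then force the coefficients into their degenerate form, namely $\beta_i=0$ for all $i$, $a/b\in\mathrm{PC}^1[0,T]$, and $(a/b)'-c+a^2/b\equiv0$, which is exactly the negation of (C). The resulting contradiction gives $\rho\neq2$, hence $-2<\rho<2$, and \eqref{2.1} is stable. The delicate point throughout is the sharp, equality-case form of the Lyapunov-type inequality: extracting the rigid degenerate structure from saturation, rather than merely $|\rho|\le2$, is where the argument must be most careful.
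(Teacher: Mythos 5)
Your road map (reduce to the Floquet multipliers via Lemma \ref{Lem2.1}, then combine a Lyapunov-type bound with an equality-case/degeneracy analysis tied to condition (C)) contains the right ingredients, and you have correctly located the crux; but as written there are two genuine gaps. First, the bound $A>-2$ does not ``transfer unchanged'' from the proof of Theorem \ref{t03}: the contradiction with \eqref{113} is only available after one knows that the $x$-component of a Floquet solution has two zeros $t_1<t_2\le t_1+T$, and producing those zeros when $A^2\ge 4$ is precisely the content of Lemma \ref{Lem3.2}, whose proof under the equality hypothesis \eqref{111b} already uses condition (C). In particular, for $A\le -2$ the Floquet factor acting on the continuous variable $z(t)=x(t)/(\alpha_1\cdots\alpha_i)$ over one period is $\rho/\prod_{i=1}^r\alpha_i$, which is \emph{positive} when $\prod_{i=1}^r\alpha_i=-1$ (allowed by \eqref{111}), so negativity of $\rho$ alone forces no sign change and hence no zero of $x$. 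Second, under \eqref{111b} the quantity $\int_0^T(c-a^2/b)\,\mathrm{d}t+\sum_{i=1}^r\beta_i/\alpha_i$ is exactly zero, so your step ``$\rho\le 2$ from nonnegativity'' cannot be decoupled from the equality analysis: for \emph{every} $A\ge 2$ (not only $A=2$) with $x$ nowhere vanishing, the Riccati variable $w=u/x$ is $T$-periodic and gives $\int_0^T b\,(w+a/b)^2\,\mathrm{d}t=-\big[\int_0^T(c-a^2/b)\,\mathrm{d}t+\sum_{i=1}^r\beta_i/\alpha_i\big]=0$, and only then does (C) (via $w\equiv -a/b$, hence $\beta_i=0$ for all $i$, $a/b\in\mathrm{PC}^1[0,T]$, and $(a/b)'-c+a^2/b\equiv0$) yield the contradiction. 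That is exactly the step you defer with ``should then force,'' and it is the substance of Lemma \ref{Lem3.2}; without carrying it out, the proposal is a plan rather than a proof.

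For comparison, the paper does not reprove Theorem \ref{t04} (it is quoted from \cite{guszaf2}); its template, used verbatim for Theorem \ref{t2}, is: assume $A^2\ge 4$, invoke Lemma \ref{Lem3.2} to obtain a solution whose first component has two consecutive zeros in an interval of length at most $T$, and then derive from those zeros a Lyapunov-type inequality contradicting the strict smallness hypothesis \eqref{113}. Your decomposition into separate one-sided bounds on the trace is closer to Krein's original scheme for the non-impulsive system, but in the impulsive setting both signs of $A$ and the degeneracy analysis are packaged into the single Lemma \ref{Lem3.2} --- which is, in effect, the lemma your sketch would have to reprove.
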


All results above have limitations. For instance, if \begin{equation}
\int_{0}^{T}|a(t)|\,\mathrm{d}t>2
\label{acond} \end{equation} 
or 
\begin{equation}
\int_{0}^{T}b(t)\,\mathrm{d}t\left[\int_{0}^{T}c^{+}(t)\,\mathrm{d}t+\sum_{i=1}^{r}\bigg(\frac{
\beta _{i}}{\alpha _{i}}\bigg)^{+}\right]> 4,  \label{acond2}
\end{equation}
then none of the above theorems is applicable, in view of the fact that  the inequality (\ref{113}) fails to hold.

Very recently, Wang \cite{wang} has proved the following stability theorem for system (\ref{1.2}) in which (\ref{acond}) is allowed. 

\begin{theorem} \label{t1c1} Assume  that  
\begin{equation}
b(t)>0,\quad \int_{0}^{T}\bigg(c(t)-\frac{a^{2}(t)}{b(t)}\bigg)\,\mathrm{d}%
t>0;  \label{3.2t1c1}
\end{equation}%
\begin{equation}\label{1.6a}
\int_0^T b(t)\mathrm{d}t\int_0^T c^{+}(t)\,\mathrm{d}t<4 \exp\left(-2\int_0^T \left|a(u)\right|\mathrm{d}u\right).
\end{equation}
Then system (\ref{1.2}) is stable.
\end{theorem}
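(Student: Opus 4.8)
The plan is to read off stability from the Floquet multipliers and to bound the trace of the monodromy matrix by a direct case analysis. Let $\Phi(t)$ be the fundamental matrix of \eqref{1.2} normalized by $\Phi(0)=I$, and set $M=\Phi(T)$. The coefficient matrix of \eqref{1.2} has zero trace, so Liouville's formula gives $\det M=1$; hence the multipliers are $\rho,\rho^{-1}$ with $\rho+\rho^{-1}=\operatorname{tr}M$. If $|\operatorname{tr}M|<2$ the multipliers are simple and lie on the unit circle, so $M$ is diagonalizable and every solution is bounded on $\mathbb R$. Thus it suffices to prove $|\operatorname{tr}M|<2$. I argue by contradiction: if $|\operatorname{tr}M|\ge 2$, then $M$ has a real eigenvalue $\rho$ with $|\rho|\ge1$ and a corresponding real solution $(x,u)$ of \eqref{1.2} with $(x,u)(t+T)=\rho\,(x,u)(t)$ for all $t$. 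I split according to whether $x$ has a zero.

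Suppose first that $x(t)\neq0$ for every $t$. Then the Riccati variable $w=u/x$ is defined on all of $\mathbb R$, and it is $T$-periodic since $w(t+T)=\rho u(t)/(\rho x(t))=w(t)$. Differentiating and using \eqref{1.2} gives $w'=-c-2aw-bw^2=-\big(c-a^2/b\big)-b\,(w+a/b)^2$. Integrating over a period, the left side vanishes because $w$ is periodic, so
\[
\int_0^T\Big(c-\frac{a^2}{b}\Big)\,dt=-\int_0^T b\,\Big(w+\frac{a}{b}\Big)^2\,dt\le 0,
\]
because $b>0$. This contradicts \eqref{3.2t1c1}.

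Suppose instead that $x(t^\ast)=0$ for some $t^\ast$. Then $x(t^\ast+T)=\rho x(t^\ast)=0$, so $x$ vanishes at both endpoints of $[t^\ast,t^\ast+T]$; letting $t_1<t_2$ be two consecutive zeros in this interval we have $t_2-t_1\le T$ and $x\neq0$ on $(t_1,t_2)$. To exploit this I remove the diagonal term via $x=e^{A(t)}p$, $u=e^{-A(t)}q$ with $A(t)=\int_0^t a(s)\,ds$, under which $(p,q)$ solves $p'=\tilde b\,q$, $q'=-\tilde c\,p$ with $\tilde b=b\,e^{-2A}>0$ and $\tilde c=c\,e^{2A}$, that is $\big(\tilde b^{-1}p'\big)'+\tilde c\,p=0$. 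Since $e^{A}>0$, $p$ has exactly the zeros of $x$, so $p(t_1)=p(t_2)=0$, and the Lyapunov-type inequality for this self-adjoint equation yields
\[
\int_{t_1}^{t_2}\tilde b\,dt\int_{t_1}^{t_2}\tilde c^{+}\,dt>4 .
\]
Now comes the decisive estimate. Writing the product as the double integral $\int_{t_1}^{t_2}\!\int_{t_1}^{t_2}b(t)\,c^{+}(s)\,e^{2(A(s)-A(t))}\,ds\,dt$ and noting that $|A(s)-A(t)|=\big|\int_t^s a\big|\le\int_{t_1}^{t_2}|a|\le\int_0^T|a|$ (periodicity of $|a|$ and $t_2-t_1\le T$), the kernel is at most $\exp\big(2\int_0^T|a|\big)$; since also $\int_{t_1}^{t_2}b\le\int_0^T b$ and $\int_{t_1}^{t_2}c^{+}\le\int_0^T c^{+}$ by periodicity, we get
\[
4<\exp\Big(2\int_0^T|a(u)|\,du\Big)\int_0^T b(t)\,dt\int_0^T c^{+}(t)\,dt,
\]
which contradicts \eqref{1.6a}. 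Hence $|\operatorname{tr}M|<2$ and the system is stable.

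I expect the decisive point to be the last estimate: keeping the product $\int\tilde b\int\tilde c^{+}$ as a single double integral, so that only the difference $A(s)-A(t)$ appears, is what produces the sharp factor $\exp(-2\int_0^T|a|)$ demanded by \eqref{1.6a}; estimating $e^{-2A}$ and $e^{2A}$ separately would lose a factor and give only the weaker $4\exp(-4\int_0^T|a|)$. The other ingredient to be supplied is the Lyapunov-type inequality with sharp constant $4$ for $(\tilde b^{-1}p')'+\tilde c\,p=0$, which is exactly the inequality the paper sets out to establish; the case split above is arranged precisely so that the nonoscillatory alternative is controlled by the Riccati identity and \eqref{3.2t1c1}, while the oscillatory alternative is controlled by that Lyapunov inequality and \eqref{1.6a}.
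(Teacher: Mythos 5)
Your proof is correct, and it reaches the conclusion by a genuinely different route from the paper's. The statement is Wang's theorem, which the paper obtains as the impulse-free specialization of Theorem \ref{t1}, so the relevant comparison is with the Section \ref{Sec4} proof. Both arguments reduce stability to $|A|<2$ for the monodromy matrix and argue by contradiction, but they diverge at two points. First, where the paper invokes Lemma \ref{Lem3.1} (imported from \cite{guszaf2}) to produce a solution whose first component has two zeros within a period length, you supply a self-contained dichotomy: if $x$ never vanishes, the $T$-periodicity of the Riccati variable $w=u/x$ together with $w'=-(c-a^2/b)-b(w+a/b)^2$ forces $\int_0^T(c-a^2/b)\le 0$, contradicting \eqref{3.2t1c1} --- this is essentially a proof of the lemma rather than a citation of it. Second, where the paper derives its Lyapunov-type inequality directly (maximum point $t_0$, integrating factor $e^{-\int a}$, Cauchy--Schwarz, and the identity $(vz)'=-cz^2+bv^2$), you transform to the self-adjoint equation $(\tilde b^{-1}p')'+\tilde c\,p=0$ and invoke the classical weighted Lyapunov inequality, then recover the same sharp constant $\exp(2\int_0^T|a|)$ by keeping the product as a double integral so that only the difference $A(s)-A(t)$ appears; this is exactly the mechanism that makes the paper's estimate work, and your remark about why separate estimation of $e^{\pm 2A}$ would lose a factor is on point. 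What the paper's route buys is an explicit intermediate inequality (\eqref{lyp}, restated as Theorem \ref{t3}) that carries over verbatim to the impulsive setting, where your change of variables and the textbook Lyapunov inequality would need modification at the points $\tau_i$; what your route buys is self-containedness in the non-vanishing case and a cleaner conceptual separation of the two alternatives. Two small items to tidy: justify that consecutive zeros $t_1<t_2$ exist (zeros of $x$ are simple, since $x(t^*)=0$ forces $x'(t^*)=b(t^*)u(t^*)\ne 0$ for a nontrivial solution), and either prove or precisely cite the weighted Lyapunov inequality $\int_{t_1}^{t_2}\tilde b\,\mathrm{d}t\int_{t_1}^{t_2}\tilde c^{+}\,\mathrm{d}t\ge 4$, which you correctly flag as the one ingredient left to be supplied (note that the nonstrict form already suffices against the strict hypothesis \eqref{1.6a}).
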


In the present paper our aim is to extend the results in \cite{guszaf2} so as to allow the possibilities (\ref{acond}) and (\ref{acond2}).  The paper is organized as follows. In the next section we outline some basic facts about Floquet theory for impulsive equations. The main results of the paper are stated in Section \ref{Sec3}. The proof of the theorems are given in Section \ref{Sec4}. Finally, in Section \ref{Sec5} by making use of a Lyapunov type inequality we derive a disconjugacy criterion for the impulsive system (\ref{2.1}). 

\setcounter{equation}{0}
\section{Preliminaries}\label{Sec2}
The theory of impulsive differential equations has been developed very
extensively over the past 20 years, see \cite{bs,sp} and the
references cited therein. Below for completeness we provide some basic facts given in \cite{guszaf2} about impulsive system (\ref{2.1}), see \cite{sp} for more details.

Let 
\[
X(t)=\left[ 
\begin{array}{lll}
x_{1}(t) &  & x_{2}(t) \\ 
u_{1}(t) &  & u_{2}(t)%
\end{array}%
\right] ,\quad X(0)=I_2 
\]%
be a fundamental matrix solution of (\ref{2.1}). Under the above periodicity
conditions system (\ref{2.1}) becomes periodic and therefore the Floquet
theory holds. For details of the Floquet theory we refer to \cite{bs,sp}.

The Floquet multipliers (real or complex) of (\ref{2.1}) are the roots of 
\[
\det (\rho I_2-X(T))=0, 
\]%
which is equivalent to 
\begin{equation}
\rho ^{2}-A\rho +B=0,  \label{2.2}
\end{equation}%
where 
\begin{equation}
A=x_{1}(T)+u_{2}(T),\quad B=\prod_{i=1}^{r}\alpha _{i}^{2}.  \label{2.3}
\end{equation}%
It follows from the Floquet theory that corresponding to each (complex) root 
$\rho $ there is a nontrivial solution $y(t)=(x(t),u(t))$ of (\ref{2.1})
such that 
\begin{equation}
y(t+T)=\rho \,y(t),\quad t\in \mathbb R\backslash \{\tau _{i}:\;i\in \mathbb Z\}.
\label{2.4}
\end{equation}%
Note that if $\rho _{1}$ and $\rho _{2}$ are the Floquet multipliers then we
have 
\[
\rho _{1}+\rho _{2}=A,\quad \rho _{1}\rho _{2}=B. 
\]%
System (\ref{2.1}) has two linearly independent solutions and any solution
of (\ref{2.1}) can be written as their linear combination.

In view of (\ref{2.4}) we easily obtain that 
\[
y(t+kT)=\rho ^{k}\,y(t),\quad k\in \mathbb Z. 
\]%
Clearly, if $|\rho |\neq 1$ then $y(t)$ is an unbounded solution of system (%
\ref{2.1}). It follows that if $\prod_{i=1}^{r}\alpha _{i}^{2}\neq 1$ then $%
B=\rho _{1}\rho _{2}\neq 1$ and so at least one of the multipliers will have
modulus different from $1$. Therefore (\ref{2.1}) cannot
be stable unless $B=1$. 

Clearly, if $B=1$ then (\ref{2.2}) becomes 
\begin{equation}
\rho ^{2}-A\,\rho +1=0.  \label{2.6}
\end{equation}%
Since the coefficients in equation (\ref{2.1}) are real, the components of
solutions $(x_{1}(t),u_{1}(t))$ and $(x_{2}(t),u_{2}(t))$ can be taken
real-valued. Obviously, the number $A$ defined by (\ref{2.3}) then becomes
real as well.

\begin{lemma}[\cite{guszaf2}]
\label{Lem2.1} Assume that $B=1$. Then system (\ref{2.1}%
) is unstable if $|A|>2$, and stable if $|A|<2$. In case $|A|=2$, system (\ref{2.1}) is stable when $u_1(T)=x_2(T)=0$, conditionally stable and not stable otherwise.

\end{lemma}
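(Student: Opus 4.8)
The plan is to read off the two Floquet multipliers $\rho_1,\rho_2$ — the roots of the reduced characteristic equation \eqref{2.6} — from the real trace $A$, and to convert information about their moduli and about the Jordan structure of $X(T)$ into boundedness statements through the relation $y(t+kT)=\rho^k y(t)$. The uniform ingredient is that every solution belongs to $\mathrm{PC}^1[0,T]$ and is therefore bounded on the compact period $[0,T]$, so the behaviour of $y$ on all of $\mathbb R$ is dictated solely by the scalar factor $\rho^k$. Suppose first that $|A|>2$. Then the discriminant $A^2-4$ is positive, so $\rho_1,\rho_2$ are real and distinct, and since $\rho_1\rho_2=1$ one of them, say $\rho_1$, has $|\rho_1|>1$ while $|\rho_2|=|\rho_1|^{-1}<1$. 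Writing an arbitrary nontrivial solution as $y=c_1y_1+c_2y_2$ in the associated Floquet basis and evaluating $y(kT)=c_1\rho_1^{\,k}y_1(0)+c_2\rho_2^{\,k}y_2(0)$ (with $y_j(0)\neq0$ by uniqueness for the linear impulsive initial value problem, the impulse maps being invertible since $\alpha_i\neq0$), I would argue that the $\rho_1$-term dominates as $k\to+\infty$ when $c_1\neq0$, and the $\rho_2$-term forces growth as $k\to-\infty$ when $c_1=0$. Either way $|y(kT)|\to\infty$, so all nontrivial solutions are unbounded and \eqref{2.1} is unstable.

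If instead $|A|<2$, then $A^2-4<0$ and $\rho_1,\rho_2$ are complex conjugates with $\rho_1\rho_2=|\rho_1|^2=1$, so both have modulus one. For such a multiplier, writing $t=s+kT$ with $s\in[0,T)$ gives $|y(t)|=|\rho|^{\,k}|y(s)|=|y(s)|$, bounded by the maximum of $|y|$ over one period; hence both Floquet solutions, and therefore (after taking real and imaginary parts) a real fundamental system, are bounded on $\mathbb R$. Every solution is then bounded and \eqref{2.1} is stable.

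The borderline case $|A|=2$ is where the real work lies. Here \eqref{2.6} has the double root $\rho=A/2\in\{1,-1\}$ with $|\rho|=1$, and the dichotomy ``stable versus conditionally-but-not stable'' hinges entirely on whether $X(T)=\rho I_2$ or $X(T)$ carries a nontrivial Jordan block. The key computation is algebraic: using $\det X(T)=B=1$, $\mathrm{tr}\,X(T)=A=2\rho$, and $\rho^2=1$, I set $d=x_1(T)-\rho$ so that $u_2(T)-\rho=-d$, and then $0=\det\bigl(X(T)-\rho I_2\bigr)=-d^2-x_2(T)u_1(T)$, giving $x_2(T)u_1(T)=-d^2$. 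Consequently $X(T)=\rho I_2$ exactly when $x_2(T)=u_1(T)=0$, these two conditions already forcing $d=0$. When they hold, every solution satisfies $y(t+kT)=\rho^ky(t)$ and is bounded, so the system is stable. When they fail, $\rho$ has algebraic multiplicity two but geometric multiplicity one: a genuine eigenvector yields a Floquet solution with $|y(t+kT)|=|y(t)|$, hence bounded, whereas a generalized eigenvector $w$ satisfying $X(T)w=\rho w+v$ gives $X(T)^kw=\rho^kw+k\rho^{k-1}v$ and thus a solution growing linearly in $|k|$, hence unbounded. The system is then conditionally stable but not stable, completing all cases. I expect this last case — pinning down the Jordan structure via the identity $x_2(T)u_1(T)=-d^2$ and its consequence — to be the main obstacle; the two non-borderline cases are routine modulus estimates.
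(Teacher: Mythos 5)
Your argument is correct and is exactly the standard Floquet-multiplier analysis that the paper sets up in Section \ref{Sec2} (the quadratic \eqref{2.2}, the relation $y(t+kT)=\rho^k y(t)$, and the observation that $|\rho|\neq 1$ forces unboundedness); the paper itself does not reprove Lemma \ref{Lem2.1} but imports it from \cite{guszaf2}, where the proof proceeds along the same lines. Your treatment of the borderline case $|A|=2$ via the identity $x_2(T)u_1(T)=-(x_1(T)-\rho)^2$ correctly identifies when $X(T)=\rho I_2$ versus when a nontrivial Jordan block produces the linearly growing solution, so all three cases are handled.
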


In what follows, by a zero of a function $x(t)$ we mean a real number $t_{0}$
for which $x(t_{0}-)=0$ or $x(t_{0}+)=0$. Obviously, if $x(t)$ is continuous
at $t_{0}$ then $t_{0}$ becomes a real zero, i.e., $x(t_{0})=0$. Since $%
\alpha _{i}\neq 0$, for a solution $x(t)$ of equation (\ref{2.1}) $%
x(t_{0}-)=0$ implies $x(t_{0}+)=0$ and conversely. In case no such $t_{0}$
exists we will write $x(t)\neq 0$.

\begin{lemma}[\cite{guszaf2}]
\label{Lem3.1} Suppose that (\ref{111}) and (\ref{111a}) hold. If
\begin{equation}
A^{2}\geq 4,  \label{3.1}
\end{equation}%
then system (\ref{2.1}) has a nontrivial solution $y(t)=(x(t),u(t))$
possessing the following properties: there exist two points $t_{1}$ and $%
t_{2}$ in $\mathbb R$ such that 
\begin{equation}\label{t1t2}
0\leq t_{1}\leq T, \ t_{2}>t_{1}, \ t_{2}-t_{1}\leq T,
\end{equation} 
$x(t)$ has zeros at $t_{1}$ and $t_{2}$, and $x(t)\neq 0$
for all $t_{1}<t<t_{2}$.
\end{lemma}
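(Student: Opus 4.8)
The plan is to combine Floquet theory with a Riccati-type argument. First I would observe that \eqref{111} gives $B=1$, so the Floquet multipliers satisfy \eqref{2.6}; the hypothesis \eqref{3.1} forces the discriminant $A^{2}-4$ to be nonnegative, whence both multipliers $\rho_{1},\rho_{2}$ are real, and $\rho_{1}\rho_{2}=1$ guarantees they are nonzero. Applying the Floquet representation \eqref{2.4} to a real multiplier $\rho$, I obtain a real nontrivial solution $y(t)=(x(t),u(t))$ of \eqref{2.1} with $x(t+T)=\rho\,x(t)$ and $u(t+T)=\rho\,u(t)$.

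The heart of the argument is to show that $x(t)$ must vanish somewhere. Suppose, to the contrary, that $x(t)\neq 0$ for all $t$; since $\alpha_{i}\neq 0$, this means $x$ is nonzero on each interval of continuity and $x(\tau_{i}\pm)\neq 0$, so the Riccati variable $w(t)=u(t)/x(t)$ is well defined on all of $\mathbb{R}$. A direct computation from \eqref{2.1} yields the Riccati equation $w'=-b(t)w^{2}-2a(t)w-c(t)$ on each interval of continuity, together with the jump relation $w(\tau_{i}+)=w(\tau_{i}-)-\beta_{i}/\alpha_{i}$, while $x(t+T)=\rho x(t)$ and $u(t+T)=\rho u(t)$ give $w(t+T)=w(t)$, so $w$ is $T$-periodic. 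Integrating over $[0,T]$, adding the contributions of the continuous pieces and the jumps (the boundary terms cancel by periodicity, since neither $0$ nor $T$ is an impulse point), and completing the square using $b(t)>0$, I arrive at
\[
\int_{0}^{T} b(t)\Big(w(t)+\frac{a(t)}{b(t)}\Big)^{2}\,\mathrm{d}t+\int_{0}^{T}\Big(c(t)-\frac{a^{2}(t)}{b(t)}\Big)\,\mathrm{d}t+\sum_{i=1}^{r}\frac{\beta_{i}}{\alpha_{i}}=0.
\]
The first integral is nonnegative, whereas the remaining terms are strictly positive by \eqref{111a}; this contradiction shows that $x(t)$ has at least one zero.

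Finally I would pass from a single zero to a pair of consecutive zeros meeting \eqref{t1t2}. Because $x(t+T)=\rho x(t)$ with $\rho\neq 0$, the zero set of $x$ is invariant under translation by $T$; translating an arbitrary zero by a suitable multiple of $T$ produces a zero $t_{1}\in[0,T)$, so $0\le t_{1}\le T$. At any zero $t_{0}$ of $x$ one has $u(t_{0})\neq 0$ (otherwise $y\equiv 0$), and since $b>0$ the zero is simple, hence isolated; thus the nonempty finite set of zeros in $(t_{1},t_{1}+T]$—which contains $t_{1}+T$—has a smallest element $t_{2}$. Then $t_{1}<t_{2}\le t_{1}+T$ and $x(t)\neq 0$ for $t_{1}<t<t_{2}$, as required.

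I expect the main obstacle to be the careful bookkeeping at the impulse points: establishing the jump relation for $w$, correctly accumulating the terms $\beta_{i}/\alpha_{i}$ when integrating the Riccati equation across $[0,T]$, and treating the one-sided nature of the zeros at the points $\tau_{i}$ so that the ``next zero'' $t_{2}$ is genuinely well defined.
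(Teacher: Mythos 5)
Your argument is correct. Note that this paper does not actually prove Lemma \ref{Lem3.1} --- it is quoted from \cite{guszaf2} without proof --- but your route (real Floquet multipliers from $B=1$ and $A^{2}\ge 4$; the Riccati substitution $w=u/x$ with jump $w(\tau_i+)=w(\tau_i-)-\beta_i/\alpha_i$ and $T$-periodicity of $w$, integrated over a period and squared off against \eqref{111a} to force a zero of $x$; then simplicity of zeros via $b>0$ and $T$-translation invariance of the zero set to extract consecutive zeros satisfying \eqref{t1t2}) is precisely the standard argument used in that reference, and all the impulse bookkeeping you flag is handled correctly.
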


\begin{lemma}[\cite{guszaf2}]
\label{Lem3.2} Suppose that (\ref{111}), (\ref{111b}), and (\ref{3.1}) hold, $a/b\in \mathrm{C}[0,T]$, and the condition (C) is satisfied.   
Then the conclusion of Lemma \ref{Lem3.1} remains valid.
\end{lemma}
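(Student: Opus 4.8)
The plan is to imitate the proof of Lemma \ref{Lem3.1} verbatim except at the one point where its strict inequality \eqref{111a} was invoked; the equality hypothesis \eqref{111b} forces that step to be replaced by an argument that spends condition (C). First I would recall the Floquet setup: since $B=\prod_{i=1}^{r}\alpha_i^2=1$ by \eqref{111} and $A^2\ge 4$ by \eqref{3.1}, equation \eqref{2.6} has a real, nonzero root $\rho$, and \eqref{2.4} furnishes a nontrivial solution $y=(x,u)$ with $y(t+T)=\rho\,y(t)$. The whole statement then reduces to showing that $x$ has \emph{some} zero: because $x(s+T)=\rho\,x(s)$ with $\rho\ne 0$, the zero set of $x$ is invariant under translation by $T$, so a single zero yields one in $[0,T)$; taking $t_1$ to be such a zero and $t_2$ the first zero to its right gives $t_1<t_2\le t_1+T$ and $x\ne 0$ on $(t_1,t_2)$, which is exactly \eqref{t1t2}.

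I would then argue by contradiction, assuming $x(t)\ne 0$ throughout $[0,T]$, so that $w=u/x$ is well defined there. Reading off \eqref{2.1}, $w$ satisfies the Riccati equation $w'=-b\,w^{2}-2a\,w-c$ between impulses and jumps by $w(\tau_i+)-w(\tau_i-)=-\beta_i/\alpha_i$ at each $\tau_i$, while the Floquet relation gives $w(T)=w(0)$. Integrating $w'$ over the period, inserting the jumps, and completing the square in $b\,w^{2}+2a\,w+c$ yields the identity
\[
\int_0^T b\Big(w+\frac{a}{b}\Big)^2\,\mathrm{d}t+\int_0^T\Big(c-\frac{a^2}{b}\Big)\,\mathrm{d}t+\sum_{i=1}^r\frac{\beta_i}{\alpha_i}=0,
\]
on which everything rests.

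Here the two lemmas part ways. In Lemma \ref{Lem3.1} the last two terms are strictly positive by \eqref{111a}, an immediate contradiction; under \eqref{111b} they vanish, leaving $\int_0^T b(w+a/b)^2\,\mathrm{d}t=0$, and since $b>0$ this forces $w=-a/b$ on each continuity subinterval. I would rule this out with condition (C). Because $a/b\in C[0,T]$, the function $-a/b$ is continuous across each $\tau_i$, so $w(\tau_i+)=w(\tau_i-)$, whence $\beta_i/\alpha_i=0$ and $\beta_i=0$ for all $i$, excluding (C1). Substituting $w=-a/b$ into the Riccati equation gives $(a/b)'=c-a^2/b$ on each subinterval, so $a/b\in \mathrm{PC}^1[0,T]$, excluding (C2); and the resulting identity $(a/b)'-c+a^2/b\equiv 0$ contradicts (C3). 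Thus $x\ne 0$ is impossible in every case of (C), $x$ must vanish, and the reduction of the first paragraph finishes the proof.

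The only delicate point I anticipate is legitimizing the one-sided statements used to invoke (C): from $\int_0^T b(w+a/b)^2\,\mathrm{d}t=0$ one first gets $w=-a/b$ almost everywhere, and I must observe that on each open interval $(\tau_{i-1},\tau_i)$ both sides are continuous and agree a.e., hence agree everywhere, so the limits $w(\tau_i\pm)$ really equal $-(a/b)(\tau_i\pm)$ and the jump bookkeeping is valid. The remaining items---the Floquet translation argument, the existence of a \emph{first} zero to the right of $t_1$, and the differentiation of the Riccati relation---are routine.
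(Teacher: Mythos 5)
Your argument is correct. The paper itself supplies no proof of Lemma~\ref{Lem3.2} (it is imported from \cite{guszaf2}), and your Riccati-substitution proof --- reduce to showing $x$ has a zero via the Floquet relation, set $w=u/x$, integrate over a period using $w(T)=w(0)$ and the jump $w(\tau_i+)-w(\tau_i-)=-\beta_i/\alpha_i$, complete the square, invoke \eqref{111b} to force $w\equiv -a/b$, and then contradict each clause of condition (C) --- is exactly the standard argument behind this result in that reference; the points you flag as routine (zeros of $x$ are isolated because $x'=bu\neq 0$ at a zero with $b>0$, and the a.e.\ identity upgrades to equality of one-sided limits on each subinterval) are indeed routine and handled correctly.
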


\setcounter{equation}{0}
\section{Stability Criteria} \label{Sec3}

The main results of this paper are given in the following two theorems. 

\begin{theorem} \label{t1} Assume  that 
\begin{equation}
\prod_{i=1}^{r}\alpha _{i}^{2}=1;  \label{2.5t1}
\end{equation}%
\begin{equation}
b(t)>0,\quad\int_{0}^{T}\bigg(c(t)-\frac{a^{2}(t)}{b(t)}\bigg)\,\mathrm{d}%
t+\sum_{i=1}^{r}\frac{\beta _{i}}{\alpha _{i}}>0;  \label{3.2t1}
\end{equation}%
\begin{equation}\label{scr}\begin{array}{l}
\displaystyle\exp\left(2\int_0^T \left|a(u)\right|\mathrm{d}u\right)\left[\int_0^T b(t)\mathrm{d}t\right]\left[\int_0^T c^{+}(t)\,\mathrm{d}t+\sum_{i=1}^r\left(\frac{\beta _i}{\alpha _i}\right)^{+}\right]<4.
\end{array}
\end{equation}
Then impulsive system (\ref{2.1}) is stable
\end{theorem}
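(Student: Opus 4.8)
The plan is to prove stability by showing that under the hypotheses, the Floquet trace $A$ defined in \eqref{2.3} satisfies $|A|<2$, so that Lemma~\ref{Lem2.1} gives stability. The argument proceeds by contradiction: I would assume that $A^2\geq 4$ and derive a contradiction with the sharp integral bound \eqref{scr}. Since \eqref{2.5t1} and \eqref{3.2t1} are exactly hypotheses \eqref{111} and \eqref{111a}, Lemma~\ref{Lem3.1} applies under the assumption $A^2\geq 4$ and furnishes a nontrivial solution $y(t)=(x(t),u(t))$ together with two zeros $t_1,t_2$ of $x$ satisfying \eqref{t1t2}, with $x(t)\neq 0$ strictly between them. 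The whole point is then to turn this oscillation datum into a quantitative Lyapunov-type inequality that directly contradicts \eqref{scr}.

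First I would normalize the solution on $[t_1,t_2]$ and work with $x$ and $u$ there, recalling that along solutions of \eqref{2.1} one has $x'=a(t)x+b(t)u$ and $u'=-c(t)x-a(t)u$ on the continuity intervals, with the jump relations $x(\tau_i+)=\alpha_i x(\tau_i-)$ and $u(\tau_i+)=\alpha_i u(\tau_i-)-\beta_i x(\tau_i-)$ at impulse points. The key device for handling the $|a|$ term and producing the exponential factor is the substitution
\begin{equation*}
z(t)=x(t)\exp\left(-\int_{t_1}^{t}a(s)\,\mathrm{d}s\right),
\end{equation*}
which absorbs the diagonal term $a(t)$ and converts the first-order system into a self-adjoint form in $z$; one checks that $z$ vanishes at $t_1$ and $t_2$ as well. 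I would then multiply the resulting equation by a suitable factor and integrate over $[t_1,t_2]$, being careful to account for the jump contributions at each $\tau_i\in(t_1,t_2)$, which generate the discrete terms $\beta_i/\alpha_i$. The standard move is to maximize $|x(t)|$ at some interior point $t^*$, estimate $|x(t^*)|^2$ from below by an integral of $b|u|^2$-type quantities, and bound the same quantity from above using $c^+$ and the positive parts $(\beta_i/\alpha_i)^+$, yielding the Lyapunov inequality
\begin{equation*}
1\leq \left[\int_{t_1}^{t_2} b(t)\,\mathrm{d}t\right]\left[\int_{t_1}^{t_2} c^{+}(t)\,\mathrm{d}t+\sum_{t_1<\tau_i<t_2}\left(\frac{\beta_i}{\alpha_i}\right)^{+}\right]\exp\left(2\int_{t_1}^{t_2}|a(s)|\,\mathrm{d}s\right).
\end{equation*}

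Because $t_2-t_1\leq T$ and all the data are $T$-periodic, the integrals over $[t_1,t_2]$ are dominated by the corresponding integrals over a full period $[0,T]$ (extending using periodicity when $t_2>T$), so the right-hand side is bounded above by the left-hand side of \eqref{scr}. This forces
\begin{equation*}
1\leq \exp\left(2\int_0^T|a(u)|\,\mathrm{d}u\right)\left[\int_0^T b(t)\,\mathrm{d}t\right]\left[\int_0^T c^{+}(t)\,\mathrm{d}t+\sum_{i=1}^r\left(\frac{\beta_i}{\alpha_i}\right)^{+}\right]\cdot\frac{1}{4}\cdot 4,
\end{equation*}
contradicting \eqref{scr}, which asserts that this product is strictly less than $4$. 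Hence $A^2<4$, and Lemma~\ref{Lem2.1} completes the proof.

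The step I expect to be the main obstacle is establishing the sharp Lyapunov inequality with the precise constant $4$ and the exponential weight $\exp(2\int|a|)$ rather than a weaker bound. The exponential factor, in particular, is what allows \eqref{acond} to hold, and extracting it cleanly requires the substitution above together with a careful Cauchy--Schwarz or amplitude estimate; the factor of $4$ (as opposed to $1$) typically arises from choosing the interior maximum point $t^*$ and splitting the estimate across $[t_1,t^*]$ and $[t^*,t_2]$, optimizing the split. Managing the impulse jumps so that exactly the positive parts $(\beta_i/\alpha_i)^+$ appear, while the negative contributions are discarded in the favorable direction, is the delicate bookkeeping that must be handled with care.
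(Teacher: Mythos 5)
Your overall strategy is exactly the paper's: assume $A^2\geq 4$, invoke Lemma~\ref{Lem3.1} to get a solution whose first component vanishes at $t_1,t_2$ with $t_2-t_1\leq T$, pass to a normalized pair, locate the interior maximum, apply Cauchy--Schwarz together with the identity for $(vz)'$ (with the impulse jumps supplying the $(\beta_i/\alpha_i)^+$ terms), and then use periodicity to replace $[t_1,t_2]$ by a full period and contradict \eqref{scr}. However, as written your final step does not close. The Lyapunov inequality you display has the constant $1$ on the left, and $1\leq X$ is perfectly consistent with $X<4$, which is all that \eqref{scr} denies; the expression ``$\cdot\frac{1}{4}\cdot 4$'' does not repair this. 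The paper gets the constant $4$ precisely by the device you relegate to a closing remark: integrating the identity
\begin{equation*}
\left[z(t)\exp\left(-\int_{t_1}^{t}a(u)\,\mathrm{d}u\right)\right]'=b(t)v(t)\exp\left(-\int_{t_1}^{t}a(u)\,\mathrm{d}u\right)
\end{equation*}
once from $t_1$ to the maximum point $t_0$ and once (with the companion identity \eqref{3.89}) from $t_0$ to $t_2$, and adding, which yields $2z(t_0)\leq\int_{t_1}^{t_2}b|v|\exp(-\int_{t_0}^{t}a)$; squaring via Cauchy--Schwarz then produces $4z^2(t_0)$ on the left, and cancelling $z^2(t_0)$ gives $4\leq\cdots$, which \emph{does} contradict \eqref{scr}. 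You correctly identify this as the crux, but you must actually carry it out rather than state the one-sided (constant $1$) version.

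A second, smaller point: your substitution $z(t)=x(t)\exp(-\int_{t_1}^{t}a)$ does not remove the multiplicative jumps $x(\tau_i+)=\alpha_i x(\tau_i-)$, so your $z$ is still discontinuous at the impulse points, which breaks both the comparison $z(t)\leq z(t_0)$ across impulses and the clean jump formula for $(vz)$. The paper first rescales by $(\alpha_1\cdots\alpha_i)^{-1}$ on each interval $(\tau_i,\tau_{i+1})$ as in \eqref{3.18}, which makes $z$ continuous and reduces the impulse action to the additive jump $v(\tau_i+)=v(\tau_i-)-(\beta_i/\alpha_i)z(\tau_i)$; the exponential integrating factor is then applied separately inside the derivative identities rather than being folded into the definition of $z$. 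You should adopt that two-step normalization; with it, and with the two-sided integration giving the constant $4$, your argument coincides with the paper's proof.
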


In case (\ref{3.2t1}) fails we have the following alternative.   
\begin{theorem} \label{t2} Assume that (\ref{2.5t1}) and (\ref{scr}) hold, $a/b \in C[0,T]$, the condition (C) is satisfied, and that   
\begin{equation}
b(t)>0,\quad\int_{0}^{T}\bigg(c(t)-\frac{a^{2}(t)}{b(t)}\bigg)\,\mathrm{d}%
t+\sum_{i=1}^{r}\frac{\beta _{i}}{\alpha _{i}}=0.  \label{3.2t2}
\end{equation}%
Then impulsive system (\ref{2.1}) is stable.
\end{theorem}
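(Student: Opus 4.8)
The plan is to reduce, via Lemma \ref{Lem2.1}, to proving the strict inequality $|A|<2$, and then to derive a contradiction from the assumption $A^{2}\ge 4$ through a Lyapunov-type inequality. By \eqref{2.5t1} we have $B=\prod_{i=1}^{r}\alpha_i^{2}=1$, so the Floquet equation \eqref{2.2} reduces to \eqref{2.6}; by Lemma \ref{Lem2.1} it then suffices to show $|A|<2$, where $A=x_1(T)+u_2(T)$ as in \eqref{2.3}. Suppose instead that $A^{2}\ge 4$, i.e.\ \eqref{3.1} holds. The hypotheses of Theorem \ref{t2}, namely \eqref{2.5t1}, the equality \eqref{3.2t2}, $a/b\in C[0,T]$, and condition (C), are precisely those required by Lemma \ref{Lem3.2}; hence Lemma \ref{Lem3.2} supplies a nontrivial solution $y=(x,u)$ of \eqref{2.1} together with points $t_1,t_2$ satisfying \eqref{t1t2} such that $x$ vanishes at $t_1$ and $t_2$ and $x\ne 0$ on $(t_1,t_2)$. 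This is the \emph{only} step at which the argument differs from the proof of Theorem \ref{t1}: there one invokes Lemma \ref{Lem3.1}, here Lemma \ref{Lem3.2}; everything below is common to both.

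Next I would strip the diagonal term from \eqref{2.1} by setting $x(t)=e^{A(t)}z(t)$ and $u(t)=e^{-A(t)}v(t)$, where $A(t)=\int_{t_1}^{t}a(s)\,ds$ (note that $A$ is continuous, even across the $\tau_i$). A short computation turns \eqref{2.1} into the diagonal-free impulsive Hamiltonian system $z'=\tilde b\,v,\ v'=-\tilde c\,z$ with $\tilde b=e^{-2A}b>0$, $\tilde c=e^{2A}c$, and impulse data $\tilde\alpha_i=\alpha_i$, $\tilde\beta_i=e^{2A(\tau_i)}\beta_i$. Since $e^{-A}$ never vanishes and $\alpha_i\ne 0$, the function $z=e^{-A}x$ has exactly the same zeros as $x$, so $z(t_1)=z(t_2)=0$ and $z\ne 0$ on $(t_1,t_2)$.

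Applying the Lyapunov-type inequality for the diagonal-free impulsive system to $z$ on $[t_1,t_2]$ gives
\[
4\le\Bigl(\int_{t_1}^{t_2}\tilde b\,dt\Bigr)\Bigl(\int_{t_1}^{t_2}\tilde c^{+}\,dt+\sum_{t_1<\tau_i<t_2}\Bigl(\tfrac{\tilde\beta_i}{\tilde\alpha_i}\Bigr)^{+}\Bigr).
\]
Because $e^{2A}>0$ one has $\tilde c^{+}=e^{2A}c^{+}$ and $(\tilde\beta_i/\tilde\alpha_i)^{+}=e^{2A(\tau_i)}(\beta_i/\alpha_i)^{+}$. Writing $A_{\min}$ and $A_{\max}$ for the extrema of $A$ on $[t_1,t_2]$, I would bound $e^{-2A}\le e^{-2A_{\min}}$ in the first factor and $e^{2A},e^{2A(\tau_i)}\le e^{2A_{\max}}$ in the second, and then use $A_{\max}-A_{\min}\le\int_{t_1}^{t_2}|a|$ to combine the two weights into the single factor $\exp(2\int_{t_1}^{t_2}|a|)$, obtaining
\[
4\le\exp\Bigl(2\!\int_{t_1}^{t_2}\!|a|\Bigr)\Bigl(\int_{t_1}^{t_2}b\Bigr)\Bigl(\int_{t_1}^{t_2}c^{+}+\sum_{t_1<\tau_i<t_2}\bigl(\tfrac{\beta_i}{\alpha_i}\bigr)^{+}\Bigr).
\]
Finally, since $t_2-t_1\le T$ and $|a|,b,c^{+}$ and $\{(\beta_i/\alpha_i)^{+}\}$ are $T$-periodic and nonnegative, each factor on the right is at most its value over one full period, so the right-hand side is dominated by $\exp(2\int_0^T|a|)(\int_0^T b)(\int_0^T c^{+}+\sum_{i=1}^r(\beta_i/\alpha_i)^{+})$, which contradicts \eqref{scr}. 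Hence $A^{2}<4$, and Lemma \ref{Lem2.1} yields the stability of \eqref{2.1}.

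The main obstacle is the diagonal-free Lyapunov-type inequality invoked above: the jumps $z(\tau_i+)=\alpha_i z(\tau_i-)$ invalidate the naive fundamental-theorem-of-calculus estimate, so one must sum the classical bound on $\max|z|$ over the continuity subintervals and absorb the impulsive contributions $(\beta_i/\alpha_i)^{+}$ correctly; this is exactly the inequality developed for \eqref{2.1} (cf.\ Section \ref{Sec5}). The second delicate point is isolating the \emph{sharp} weight $\exp(2\int|a|)$ rather than $\exp(4\int|a|)$, which is what makes \eqref{scr}—and hence the admissibility of \eqref{acond} and \eqref{acond2}—the correct hypothesis; this hinges on pairing $e^{-2A_{\min}}$ with $e^{2A_{\max}}$ so that only the oscillation of $A$, controlled by $\int|a|$, survives.
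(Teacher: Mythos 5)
Your proposal is correct, and its skeleton coincides with the paper's: reduce to $|A|<2$ via Lemma \ref{Lem2.1}, assume $A^{2}\ge 4$, and invoke Lemma \ref{Lem3.2} in place of Lemma \ref{Lem3.1} to produce the solution with consecutive zeros $t_1,t_2$ --- indeed the paper's entire proof of Theorem \ref{t2} is the one-sentence remark that this substitution is the only change needed in the proof of Theorem \ref{t1}. Where you genuinely diverge is in how you run that underlying argument. The paper first normalizes away the multiplicative jumps by setting $z=x/(\alpha_1\cdots\alpha_i)$ as in (\ref{3.18}), keeps the diagonal term, and carries the integrating factor $\exp(-\int a)$ through the two-sided integration and Cauchy--Schwarz step to land directly on the weighted inequality (\ref{lyp}). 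You instead conjugate with $e^{\pm A}$, $A(t)=\int_{t_1}^{t}a$, to remove the diagonal term, apply the $a\equiv 0$ case of the Lyapunov-type inequality (Theorem \ref{t3}) to the transformed system, and reinstate the weight via $e^{-2A_{\min}}e^{2A_{\max}}\le\exp(2\int_{t_1}^{t_2}|a|)$; both routes produce the same sharp constant $\exp(2\int_0^T|a|)$ and the same contradiction with (\ref{scr}). Two remarks. First, your route is not more elementary, since Theorem \ref{t3} is proved in the paper by exactly the computation of Section \ref{Sec4}; but it does cleanly decouple the two difficulties (impulses versus the diagonal term), which is a nice structural observation. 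Second, be careful with your closing description of how the diagonal-free inequality is obtained: your transformed $z$ still satisfies $z(\tau_i+)=\alpha_i z(\tau_i-)$, and a ``classical bound on $\max|z|$ summed over continuity subintervals'' does not go through directly because the jump increments $(\alpha_i-1)z(\tau_i-)$ have no sign; the correct justification is the paper's normalization (\ref{3.18}), which eliminates those jumps before the fundamental-theorem-of-calculus estimate is made. Since you ultimately defer to the established inequality of Section \ref{Sec5}, this does not create a gap, but the parenthetical sketch as written would not work.
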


\begin{remark} \rm If there is no impulse, i.e, $\alpha_i=1$ and $\beta_i=0$ for all $i\in \mathbb{Z}$, then Theorem \ref{t1} and Theorem \ref{t2} result in \cite[Corollary 4.1, Corollary 4.2]{wang}, cf. Theorem \ref{1.1} and Theorem \ref{1.2}. 
\end{remark}

\setcounter{equation}{0}
\section{Proof of Theorem \ref{t1}}\label{Sec4}
The proof is based on the arguments developed in \cite{guszaf2,wang}. In virtue of Lemma \ref{Lem2.1} it is sufficient to show that $A^{2}<4.$
Assume on the contrary that $A^{2}\geq 4.$  By Lemma \ref{Lem3.1} there exists a solution 
$y(t)=(x(t),u(t))$ with two zeros $t_{1},t_{2}\in \lbrack 0,T]$ ($%
t_{1}<t_{2}\leq t_{1}+T$) of $x(t)$ such that $x(t)\neq 0$ for all $t\in
(t_{1},t_{2})$.

Define 
\begin{equation}
z(t)=\frac{1}{\alpha _{1}\alpha _{2}\cdots \alpha _{i}}x(t),\quad v(t)=\frac{%
1}{\alpha _{1}\alpha _{2}\cdots \alpha _{i}}u(t)  \label{3.18}
\end{equation}%
for $t\in (\tau _{i},\tau _{i+1})$ and $i\in N_0^r$, where we
put again $\tau _{0}=0$, $\tau _{r+1}=T$, and make a convention that $\alpha
_{1}\alpha _{2}\cdots \alpha _{i}=1$ if $i=0$.

It is easy to verify that%
\begin{equation}
\begin{array}{l}
z^{\prime }=a(t)z+b(t)v,\quad v^{\prime }=-c(t)z-a(t)v,\quad t\neq \tau _{i},
\\ 
z(\tau _{i}+)=z(\tau _{i}-),\quad v(\tau _{i}+)=v(\tau _{i}-)-\frac{\beta
_{i}}{\alpha _{i}}z(\tau _{i}-),%
\end{array}
\label{3.19}
\end{equation}%
where $t\in [0,T]$ and $i\in \{1,2,\ldots ,r\}$.

We may define $z(\tau _{i})=z(\tau _{i}-)$ so as to make $z(t)$ continuous
on $[0,T]$. Moreover, $z^{\prime }\in \mathrm{PC}[0,T],$ $%
z(t_{1})=z(t_{2})=0,$ and $z(t)\neq 0$ for all $t\in (t_{1},t_{2})$. We may
assume without loss of generality that $z(t)>0$ on $(t_{1},t_{2})$.

From $z^{\prime }=a(t)z+b(t)v$ we see that  
\begin{equation}\label{3.90}
\displaystyle
\left[z(t)\exp\left(-\int_{t_1}^{t}a(u)\mathrm{d}u\right)\right]^{\prime}=b(t)v(t)\exp\left(-\int_{t_1}^{t}a(u)\mathrm{d}u\right)
\end{equation}
and
\begin{equation}\label{3.89}
\displaystyle
\left[z(t)\exp\left(-\int_{t}^{t_2}a(u)\mathrm{d}u\right)\right]^{\prime}=b(t)v(t)\exp\left(-\int_{t}^{t_2}a(u)\mathrm{d}u\right).
\end{equation}                                                                                                                           
Let $t_0$ be a point in $(t_1,t_2)$ such that 
\[
z(t_{0})=\max \{z(t):\;t\in (t_{1},t_{2})\}. 
\]%

Integrating (\ref{3.90}) from $t_1$ to $t_0$, we get

\begin{eqnarray*}
z(t_0)=\int_{t_1}^{t_0}b(t)v(t)\exp\left(-\int_{t_0}^{t}a(u)\mathrm{d}u\right)\mathrm{d}t,
\end{eqnarray*}
and so 
\begin{equation}\label{3.86}
\displaystyle
z(t_0)\leq\int_{t_1}^{t_0}b(t)\left|v(t)\right|\exp\left(-\int_{t_0}^{t}a(u)\mathrm{d}u\right)\mathrm{d}t.
\end{equation}
Similarly, if we integrate (\ref{3.89}) from $t_0$ to $t_2$ then we obtain

\begin{equation}\label{3.85}
\displaystyle
z(t_0)\leq\int_{t_0}^{t_2}b(t)\left|v(t)\right|\exp\left(-\int_{t_0}^{t}a(u)\mathrm{d}u\right)\mathrm{d}t
\end{equation}

From (\ref{3.86}) and (\ref{3.85}),  
\begin{equation}\label{3.84}
\displaystyle
2z(t_0)\leq\int_{t_1}^{t_2}b(t)\left|v(t)\right|\exp\left(-\int_{t_0}^{t}a(u)\mathrm{d}u\right)\mathrm{d}t.
\end{equation}
Using the Cauchy-Schwarz inequality in (\ref{3.84}) leads to 
\begin{eqnarray} \nonumber 
&&4z^2(t_0)\leq \bigg[\int_{t_1}^{t_2}b(t)|v(t)|\exp\bigg(-\int_{t_0}^{t}a(u)\mathrm{d}u\bigg)\mathrm{d}t\bigg]^2\\
&&\qquad\leq\bigg[\int_{t_1}^{t_2}b(t)\exp\left(-2\int_{t_0}^{t}a(u)\mathrm{d}u\right)\mathrm{d}t\bigg]\bigg[\int_{t_1}^{t_2}b(t)v^2(t)\mathrm{d}t\bigg]
\label{in1}
\end{eqnarray}

On the other hand, in view of (\ref{3.19}), by a direct calculation we have 
\begin{eqnarray} \label{vz1}
(vz)^{\prime }=-c(t)z^{2}+b(t)v^{2},\quad t\neq \tau _{i}
\end{eqnarray}%
and 
\begin{eqnarray}\label{vz2}
(vz)(\tau _{i}+)-(vz)(\tau _{i}-)=\displaystyle-\frac{\beta _{i}}{\alpha _{i}}z^{2}(\tau
_{i}).%
\end{eqnarray}%
Integrating (\ref{vz1}) from $t_1$ to $t_2$  and using (\ref{vz2}) we obtain 
\begin{eqnarray*}
\sum_{t_{1}\leq \tau _{i}<t_{2}}\frac{\beta _{i}}{\alpha _{i}}z^{2}(\tau
_{i})=\int_{t_{1}}^{t_{2}}\bigg[b(t)v^{2}(t)-c(t)z^{2}(t)\bigg]\,\mathrm{d}t,
\end{eqnarray*}%
and hence   
\begin{equation}
\int_{t_{1}}^{t_{2}}b(t)v^{2}(t)\,\mathrm{d}t\leq
\int_{t_{1}}^{t_{2}}c^{+}(t)z^{2}(t)\,\mathrm{d}t+\sum_{t_{1}\leq \tau
_{i}<t_{2}}\bigg(\frac{\beta _{i}}{\alpha _{i}}\bigg)^{+}z^{2}(\tau _{i}).
\label{in2}
\end{equation}

From (\ref{in1}) and (\ref{in2}) we easily get 
\begin{eqnarray}\nonumber
4z^2(t_0)&&\leq\left[\int_{t_1}^{t_2}b(t)\exp\left(-2\int_{t_0}^{t}a(u)\mathrm{d}u\right)\mathrm{d}t\right]\times \\
&&\nonumber\qquad\qquad\bigg[\int_{t_{1}}^{t_{2}}c^{+}(t)z^{2}(t)\,\mathrm{d}t+\sum_{t_1\leq \tau_i<t_2}\left(\frac{\beta _i}{\alpha _i}\right)^{+}z^{2}(\tau_i)\bigg]\\
&&\nonumber \leq z^2(t_0) \left[\int_{t_1}^{t_2}b(t)\exp\left(-2\int_{t_0}^{t}a(u)\mathrm{d}u\right)\mathrm{d}t\right]\times \\
&&\qquad\qquad
\bigg[\int_{t_{1}}^{t_{2}}c^{+}(t)\,\mathrm{d}t+\sum_{t_1\leq \tau_i<t_2}\left(\frac{\beta _i}{\alpha _i}\right)^{+}\bigg]
\label{lyp}
\end{eqnarray}
where we have used the fact that 
$$\mbox{$z(t_{0})\geq z(t)$  for all $t\in (t_{1},t_{2})$}.$$

From (\ref{lyp}) we obtain  
\begin{eqnarray*}\label{lyap}
4&\leq&\left[\int_{t_1}^{t_1+T}b(t)\exp\left(-2\int_{t_0}^{t}a(u)\mathrm{d}u\right)\mathrm{d}t\right]
\left[\int_{t_{1}}^{t_{1}+T}c^{+}(t)\,\mathrm{d}t+\sum_{t_1\leq \tau
_i<t_1+T}\left(\frac{\beta _i}{\alpha _i}\right)^{+}\right]\\
&=&\left[\int_0^T b(t+t_1)\exp\left(-2\int_{t_{0}-t_1}^{t} a(u+t_1)\mathrm{d}u\right)\mathrm{d}t\right]
\left[\int_0^T c^{+}(t)\,\mathrm{d}t+\sum_{i=1}^r\left(\frac{\beta _i}{\alpha _i}\right)^{+}\right]\\
&\leq & \exp\left(2\int_{0}^{T} |a(u+t_1)|\mathrm{d}u\right)\left[\int_0^T b(t+t_1)\mathrm{d}t\right]
\left[\int_0^T c^{+}(t)\,\mathrm{d}t+\sum_{i=1}^r\left(\frac{\beta _i}{\alpha _i}\right)^{+}\right]\\
&= & \exp\left(2\int_{0}^{T} |a(t)|\mathrm{d}t\right)\left[\int_0^T b(t)\mathrm{d}t\right]
\left[\int_0^T c^{+}(t)\,\mathrm{d}t+\sum_{i=1}^r\left(\frac{\beta _i}{\alpha _i}\right)^{+}\right].
\end{eqnarray*} 
This inequality clearly contradicts (\ref{scr}).
\qed%

The proof of Theorem \ref{t2} is exactly the same as that of Theorem \ref{t1}, except that Lemma \ref{Lem3.2} is used instead of Lemma \ref{Lem3.1}.

\section{A Disconjugacy Criterion} \label{Sec5}

In this section we establish a disconjugacy criterion for system (\ref{2.1}). 
The system is called disconjugate on the interval $[t_1,t_2]$ if and only if there is no real solution 
$(x,u)$ of (\ref{2.1}) with $x$ nontrivial and having two or more zeros on $[t_1,t_2]$ in the sense described at the beginning of Section \ref{Sec4}. 
The periodicity conditions need not hold. That is, the system (\ref{2.1}) is not required to be periodic. So we let \textrm{PC}$[t_{1},t_{2}]$
denote the set of real-valued functions
\[
f:[t_{1},t_{2}]\setminus \{\tau _{i}:i\in \mathbb Z\}\rightarrow \mathbb R
\]%
which are continuous on \textbf{\ }$[t_{1},t_{2}]\setminus \{\tau _{i}:i\in 
\mathbf{Z\}}$ and for which the left limit $f(\tau _{i}-)$ and the right
limit $f(\tau _{i}+)$ exist (finite) for each $i\in \mathbf{Z.}$

The proof of Theorem \label{t4} is based on a Lyapunov type inequality related to the impulsive system (\ref{2.1}). We note that Lyapunov inequalities are useful in oscillation, disconjugacy, and boundary value problems. 

\begin{theorem}[Lyapunov type inequality] \label{t3}
 Let $a,b,c\in $\textrm{PC}$[t_{1},t_{2}],$ $b(t)>0,$
and $\alpha _{i}\neq 0$ for all $i\in \mathbf{Z}.$ Suppose that (\ref{2.1})
has a solution $(x(t),u(t))$ such that $x(t_1+)=x(t_2-)=0$ and $x(t)\neq 0$ on $(t_{1},t_{2}).$ 
Then the inequality 
\begin{equation}\label{lyapunovine}
\left[\int_{t_1}^{t_2}b(t)\exp\left(-2\int_{t_0}^{t}a(u)\mathrm{d}u\right)\mathrm{d}t\right]
\left[\int_{t_{1}}^{t_{2}}c^{+}(t)\,\mathrm{d}t+\sum_{t_1\leq \tau
_i<t_2}\left(\frac{\beta _i}{\alpha _i}\right)^{+}\right]\geq 4
\end{equation}
holds for some $t_0\in(t_1,t_2)$.
\end{theorem}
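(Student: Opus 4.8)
The plan is to establish the Lyapunov type inequality \eqref{lyapunovine} by extracting the relevant computational kernel already developed in the proof of Theorem \ref{t1}, but now running the argument in the reverse logical direction. In Theorem \ref{t1} the chain of inequalities \eqref{3.84}--\eqref{lyp} was used to produce a contradiction; here the very same chain, read as an unconditional consequence of having a solution $(x,u)$ with $x(t_1+)=x(t_2-)=0$ and $x(t)\neq 0$ on $(t_1,t_2)$, yields precisely the desired inequality. The crucial observation is that none of the steps leading to \eqref{lyp} actually used the periodicity of the coefficients or the normalization $\prod_{i=1}^r\alpha_i^2=1$; they used only $b(t)>0$, $\alpha_i\neq 0$, and the existence of consecutive zeros of $x$.

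First I would reduce to the continuous-in-$x$ rescaled variables. As in \eqref{3.18}, set $z(t)=(\alpha_1\cdots\alpha_i)^{-1}x(t)$ and $v(t)=(\alpha_1\cdots\alpha_i)^{-1}u(t)$ on each $(\tau_i,\tau_{i+1})$, so that $(z,v)$ satisfies \eqref{3.19}, $z$ is continuous on $[t_1,t_2]$, $z(t_1)=z(t_2)=0$, and $z(t)\neq 0$ on $(t_1,t_2)$; without loss of generality $z(t)>0$ there. Next I would choose $t_0\in(t_1,t_2)$ to be a point where $z$ attains its maximum, which is exactly the $t_0$ claimed in the statement. Integrating the identities \eqref{3.90} and \eqref{3.89} from $t_1$ to $t_0$ and from $t_0$ to $t_2$ respectively, and combining, gives \eqref{3.84}; the Cauchy--Schwarz step then gives \eqref{in1}. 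In parallel, integrating the product identity \eqref{vz1} across the jumps \eqref{vz2} over $[t_1,t_2]$ produces \eqref{in2}, where the boundary terms $(vz)(t_1)$ and $(vz)(t_2)$ vanish because $z(t_1)=z(t_2)=0$. Substituting \eqref{in2} into \eqref{in1} and using $z(t_0)\geq z(t)$ yields
\[
4z^2(t_0)\leq z^2(t_0)\left[\int_{t_1}^{t_2}b(t)\exp\left(-2\int_{t_0}^{t}a(u)\,\mathrm{d}u\right)\mathrm{d}t\right]\left[\int_{t_1}^{t_2}c^{+}(t)\,\mathrm{d}t+\sum_{t_1\leq\tau_i<t_2}\left(\frac{\beta_i}{\alpha_i}\right)^{+}\right],
\]
which is \eqref{lyp}. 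Since $z(t_0)>0$, dividing by $z^2(t_0)$ gives \eqref{lyapunovine} directly, with no need for the final periodicity-based estimate that appeared at the end of Theorem \ref{t1}.

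The main point requiring care, rather than a genuine obstacle, is the justification that the boundary terms vanish and that the integration of \eqref{vz1} across the impulse points correctly accumulates the jump contributions \eqref{vz2}; one must track that $\sum_{t_1\leq\tau_i<t_2}(\beta_i/\alpha_i)z^2(\tau_i)$ arises as the total jump of $vz$ and that dropping the negative parts of $c$ and of $\beta_i/\alpha_i$ only enlarges the right-hand side, giving the $c^{+}$ and $(\beta_i/\alpha_i)^{+}$ terms. I would also note that the hypotheses here are weaker than in the periodic setting: we assume only $a,b,c\in\mathrm{PC}[t_1,t_2]$, $b>0$, and $\alpha_i\neq 0$, so the argument must not invoke \eqref{2.5t1}, \eqref{3.2t1}, or Lemma \ref{Lem3.1}. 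Since the solution with consecutive simple-zeros is supplied by hypothesis rather than constructed, the proof is in fact shorter than that of Theorem \ref{t1}, and it is essentially the distillation of the estimate that drove the stability argument.
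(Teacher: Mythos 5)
Your proposal is correct and follows essentially the same route as the paper, which likewise obtains \eqref{lyapunovine} by rerunning the computation of the proof of Theorem \ref{t1} up to \eqref{lyp} with $t_0$ the maximum point of $z$ and then cancelling $z^2(t_0)>0$. Your explicit observation that none of the steps through \eqref{lyp} uses periodicity or \eqref{2.5t1} is exactly the (tacit) justification the paper relies on.
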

\begin{proof}  
Taking  $y(t)=(x(t),u(t))$ with $t_{1},t_{2}$ ($
t_{1}<t_{2}$) the two zeros of $x(t)$ such that $x(t)\neq 0$ for all $t\in
(t_{1},t_{2})$, and proceeding as in the proof of Theorem \ref{t1} we arrive at (\ref{lyp}), which is the same as (\ref{lyapunovine}). \end{proof}

\begin{theorem}[Disconjugacy] \label{t4a}
 Let $a,b,c\in \textrm{PC}[t_1,t_2]$, $b(t)>0,$
and $\alpha_{i}\neq 0$ for all $i\in \mathbf{Z}.$ If for every $t_0\in(t_1,t_2)$,
\begin{equation}
\hspace{-.71cm}\left[\int_{t_1}^{t_2} b(t)\exp\left(-2\int_{t_0}^{t}a(u)\mathrm{d}u\right)\mathrm{d}t\right]
\left[\int_{t_1}^{t_2} c^{+}(t)\,\mathrm{d}t+\sum_{t_{1}\leq \tau _{i}<t_{2}}\left(\frac{\beta _i}{\alpha _i}\right)^{+}\right]< 4,
\label{ly-d}
\end{equation}
then system (\ref{2.1}) is disconjugate on $[t_1,t_2]$.
\end{theorem}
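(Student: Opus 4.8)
The plan is to prove the Disconjugacy Theorem \ref{t4a} by contraposition, leveraging the Lyapunov type inequality (Theorem \ref{t3}) that was just established. The statement to prove is that if the strict inequality \eqref{ly-d} holds for \emph{every} $t_0\in(t_1,t_2)$, then the system \eqref{2.1} is disconjugate on $[t_1,t_2]$. Disconjugacy, by the definition given at the start of Section \ref{Sec5}, means there is no real solution $(x,u)$ with $x$ nontrivial having two or more zeros on $[t_1,t_2]$. So the natural move is to negate this conclusion and derive a contradiction with the hypothesis.

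First I would assume, for contradiction, that the system is \emph{not} disconjugate on $[t_1,t_2]$. Then there exists a nontrivial real solution $(x(t),u(t))$ of \eqref{2.1} with $x$ having at least two zeros in $[t_1,t_2]$. Call two consecutive such zeros $s_1$ and $s_2$ with $t_1\le s_1<s_2\le t_2$, chosen so that $x(t)\neq 0$ for all $t\in(s_1,s_2)$; this is always possible by picking adjacent zeros of the (piecewise-continuous) function $x$. On the subinterval $[s_1,s_2]$ the solution satisfies exactly the hypotheses of the Lyapunov type inequality Theorem \ref{t3}: indeed $a,b,c\in\mathrm{PC}[s_1,s_2]$, $b(t)>0$, $\alpha_i\neq 0$, and $x(s_1+)=x(s_2-)=0$ with $x\neq 0$ on the open interval. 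Applying Theorem \ref{t3} on $[s_1,s_2]$ then yields a point $t_0\in(s_1,s_2)$ for which
\begin{equation*}
\left[\int_{s_1}^{s_2}b(t)\exp\left(-2\int_{t_0}^{t}a(u)\,\mathrm{d}u\right)\mathrm{d}t\right]
\left[\int_{s_1}^{s_2}c^{+}(t)\,\mathrm{d}t+\sum_{s_1\le\tau_i<s_2}\left(\frac{\beta_i}{\alpha_i}\right)^{+}\right]\ge 4.
\end{equation*}

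The final step is to compare this lower bound of $4$ against the hypothesis \eqref{ly-d}, which furnishes a strict upper bound strictly less than $4$. The only subtlety — and the step I expect to be the main (though mild) obstacle — is the enlargement of the domain of integration from $[s_1,s_2]$ to $[t_1,t_2]$: the hypothesis \eqref{ly-d} is stated over the full interval $[t_1,t_2]$, whereas Theorem \ref{t3} produces its inequality over the subinterval $[s_1,s_2]$. Since $b(t)>0$ and the exponential weight is positive, the integral $\int b(t)\exp(\cdots)\,\mathrm{d}t$ is monotone under enlargement of the interval; likewise $c^{+}\ge 0$ and each $(\beta_i/\alpha_i)^{+}\ge 0$, so both bracketed factors only increase when passing from $[s_1,s_2]$ to $[t_1,t_2]$. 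Therefore the left-hand side of \eqref{ly-d} evaluated at this same $t_0\in(s_1,s_2)\subseteq(t_1,t_2)$ is at least as large as the subinterval expression, hence $\ge 4$, directly contradicting \eqref{ly-d}. One must take a little care that $t_0$, produced inside $(s_1,s_2)$, does lie in $(t_1,t_2)$ so that \eqref{ly-d} genuinely applies to it — but this is immediate from $(s_1,s_2)\subseteq(t_1,t_2)$. This contradiction shows no such pair of zeros can exist, and the system is disconjugate on $[t_1,t_2]$, completing the proof.
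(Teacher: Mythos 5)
Your proposal is correct and follows essentially the same route as the paper: assume a solution with two consecutive zeros $s_1<s_2$ in $[t_1,t_2]$, apply the Lyapunov type inequality (Theorem \ref{t3}) on $[s_1,s_2]$ to get a $t_0\in(s_1,s_2)$ with the product at least $4$, then use positivity of $b$, the exponential weight, $c^{+}$ and $(\beta_i/\alpha_i)^{+}$ to enlarge the interval to $[t_1,t_2]$ and contradict \eqref{ly-d}. The paper performs the same two-step chain of inequalities, only leaving the monotonicity justification implicit.
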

\begin{proof}  
Suppose on the contrary that there is a solution
$y(t)=(x(t),u(t))$ with nontrivial $x(t)$ having two zeros 
$s_1,s_2\in [t_1,t_2]$ 
($s_{1}<s_{2}$) such that $x(t)\neq 0$ for all $t\in (s_{1},s_{2})$. Applying Theorem \ref{t3} we see that there is a $t_0\in(s_{1},s_{2})$  
$$
4\leq \left[\int_{s_1}^{s_2} b(t)\exp\left(-2\int_{t_0}^{t}a(u)\mathrm{d}u\right)\mathrm{d}t\right]
\left[\int_{s_1}^{s_2} c^{+}(t)\,\mathrm{d}t+\sum_{s_{1}\leq \tau _{i}<s_{2}}\left(\frac{\beta _i}{\alpha _i}\right)^{+}\right]$$
$$
\quad\leq\left[\int_{t_1}^{t_2} b(t)\exp\left(-2\int_{t_0}^{t}a(u)\mathrm{d}u\right)\mathrm{d}t\right]
\left[\int_{t_1}^{t_2} c^{+}(t)\,\mathrm{d}t+\sum_{t_{1}\leq \tau _{i}<t_{2}}\left(\frac{\beta _i}{\alpha _i}\right)^{+}\right]
$$

Clearly, this inequality contradicts (\ref{ly-d}). The proof is complete. \end{proof}

\end{document}